\tikzstyle{braid}=[thick]
\tikzset{arr/.style={circle,draw,inner sep=0.03cm}}
\tikzset{unit/.style={circle,fill,inner sep=0.05cm}}
\tikzset{empty/.style={inner sep=0pt, minimum size=0pt}}
\newtheorem{theorem}{Theorem}[section]
\newtheorem{proposition}[theorem]{Proposition}
\theoremstyle{definition}
\newtheorem{definition}[theorem]{Definition}
\newtheorem{example}[theorem]{Example}
\theoremstyle{remark}
\newtheorem{remark}[theorem]{Remark}
\newcommand{\mA}{$\mathcal{A}$\xspace}
\newcommand{\A}{\mathcal{A}}
\newcommand{\mC}{$\mathcal{C}$\xspace}
\newcommand{\C}{\mathcal{C}}
\newcommand{\mD}{$\mathcal{D}$\xspace}
\newcommand{\D}{\mathcal{D}}
\newcommand{\mM}{$\mathcal{M}$\xspace}
\newcommand{\M}{\mathcal{M}}
\newcommand{\mV}{$\mathcal{V}$\xspace}
\newcommand{\V}{\mathcal{V}}
\newcommand{\sk}{\lhd}
\newcommand{\id}{\mathsf{id}}
\newcommand{\x}{\times}
\newcommand{\ox}{\otimes}
\newcommand{\Vect}{\mathsf{Vect}}
\newcommand{\Set}{\mathsf{Set}}
\newcommand{\mSet}{$\mathsf{Set}$\xspace}
\newcommand{\sw}[1]{\otimes^{#1}}
\newcommand{\Lan}{\mathsf{Lan}}
\newcommand{\Ran}{\mathsf{Ran}}
\newcommand{\inv}[1]{\big(#1\big)^{-1}}
\newcommand{\inhom}[1]{\llbracket #1 \rrbracket}
\newcommand{\uv}[1]{``#1''}
\newcommand{\smc}[1]{
	\IfEqCase{#1}{
		{}{skew monoidal\xspace}
		{l}{left skew monoidal\xspace}
		{r}{right skew monoidal\xspace}
	}[\PackageError{pret}{Undefined option to pret: #1}{}]%
}
\newcommand{\skmon}{skew monoidal\xspace}
\newcommand{\cat}{category\xspace}
\newsavebox\tikzcdbox
\newenvironment{tikzcdscale}{%
	\begin{lrbox}{\tikzcdbox}%
		\begin{tikzcd}%
		}{%
		\end{tikzcd}%
	\end{lrbox}%
	\resizebox{\linewidth}{!}{\usebox\tikzcdbox}%
}
\newcommand*{\defeq}{\mathrel{\rlap{%
			\raisebox{0.2ex}{$\m@th\cdot$}}%
		\raisebox{-0.3ex}{$\m@th\cdot$}}%
	=}
\title{Skew monoidal structures on actegories}
\author[Pavla Procházková]{Pavla Procházková}
\address{Department of Mathematics and Statistics, Faculty of Science, Masaryk University, Brno, Czech Republic}
\date{}
\begin{document}
	
	\maketitle
	
	\begin{abstract}
		We present a construction of skew monoidal structures from strong actions. We prove that the existence of a certain adjoint allows one to equip the actegory with a skew monoidal structure and that this adjunction becomes monoidal.
		This construction provides a unifying framework for the description of several examples of skew monoidal categories. We also demonstrate how braidings on the original monoidal category of a given action induce braidings on the resulting skew monoidal structure on the actegory and describe sufficient conditions for closedness of the resulting skew monoidal structure.
	\end{abstract}
	
	\section{Introduction}
	
	Skew monoidal categories are a generalization of monoidal categories, where we remove the requirement that the constraint morphisms be invertible and specify certain orientations of those. An example of a structure of this kind appeared first, at least to the knowledge of the author, in the work of Altenkirch, Chapman and Uustalu in 2010 \cite{ACU10} (and later in a reworked version \cite{ACU15}). Their paper concerns relative monads, which can be regarded as a generalization of monads from endofunctors to functors of more general form $\C \rightarrow \D$. For a fixed functor $J: \C \rightarrow \D$, we speak of relative monads on the functor $J$ or simply $J$-relative monads. As monads can be regarded as monoids in the category of endofunctors, so can $J$-relative monads be regarded as monoids in a skew monoidal structure on the functor category $[\C,\D]$ with tensor $G \sk F = \Lan_JG \circ F$ (assuming that the left Kan extensions exist).
	
	Another class of examples which motivated introduction of these structures comes from the study of bialgebras, or more generally bialgebroids, for which the base ring need not be commutative. Szlachányi observed that bialgebroids give rise to these generalized monoidal structures and introduced the name skew monoidal category in \cite{Szl12}. In fact, he shows that bialgebroids can be fully characterized by closed skew monoidal structures. In the case of bialgebras, the tensor is defined as $X \ox B \ox Y$, where $B$ is the bialgebra\footnote{The tensor induced by bialgebroids is similar, just requiring more subtlety.}.
	
	The main focus of this paper is to study skew monoidal structures arising on actegories. The term actegory refers to a category \mA, which has some (skew) monoidal category \mV acting on it.
	We prove that given a strong action $(-) * (-) : \V \x \A \rightarrow \A$, the existence of an adjoint to the functor $(-) * J : \V \rightarrow \A$, obtained by fixing an object $J$ in \mA, suffices to equip the actegory with a skew monoidal structure.
	
	This result provides a unifying framework for many of the existing examples of skew monoidal categories. We see that the structure on functor categories defined via Kan extensions as well as all structures induced by bialgebras can be seen to arise in this way. Furthermore, it allows for a more conceptual treatment of several notions on skew monoidal categories, including braidings and closedness.
	
	The paper is structured in the following way. In Section (\ref{Prelim}), we recall the definition of a skew monoidal category and introduce strong actions of categories. Section (\ref{MainResult}) contains the statement and proof of the main result of the paper, which provides a construction of a skew monoidal structure on an actegory given that a certain adjunction exists.
	In Section (\ref{MonoidalitySection}), we show that this adjunction is monoidal with respect to the (skew) monoidal structure of the acting category \mV and the newly defined tensor on the actegory \mA. 
	
	Section (\ref{Examples}) is dedicated to describing examples of skew monoidal categories, which can be seen as arising from Theorem (\ref{TheThm}). We give an account of existing examples of skew monoidal categories, which fit under the framework of (\ref{TheThm}) -- these comprise the examples on functor categories of \cite{ACU15} and examples of monoidal tensors warped by an oplax monoidal monad (or monoidal comonad), which include examples arising from bialgebras. 
	
	In Section (\ref{Braiding}), we investigate braidings on the skew monoidal structure induced by actions. Starting with an action of a braided monoidal category, we prove that there is an induced braiding on the skew monoidal structure in the sense of Bourke and Lack \cite{BL20}. These authors define braidings on a left skew monoidal category as isomorphisms of form $(P \sk A) \sk B \rightarrow (P \sk B) \sk A$. One can also consider isomorphisms $A \sk (B \sk P) \rightarrow B \sk (A \sk P)$. In fact, considering different variations of Theorem (\ref{TheThm}), we can obtain either of those versions of a braiding on both left and right skew monoidal categories.
	
	In Section (\ref{Closedness}), we explore sufficient conditions for the skew monoidal structures induced by actions to be left or right closed.
	
	\textbf{Related work.} The idea that certain (skew) monoidal actions give rise to skew monoidal structures on the actegory has already been explored by Szlachányi in \cite{Szl17}. That paper treats cases where the action $*$ has the property that for any object $A$ in the actegory \mA, the induced functor $(-)*A$ has a right adjoint. We call these actegories closed. Proposition 6.1 of [\textit{ibid.}] 
	presents a construction of a skew monoidal structure from the data of a closed actegory, which is a result closely related to the main theorem of this thesis. However, our Theorem (\ref{TheThm}) is more general. The connection with Szlachányi's result is discussed in more detail in Remark (\ref{resSzl})
	
	The main theorem in this paper also seems to be closely related to a result presented at the CT in 2018 by Philip Saville as a part of a joint work with Marcelo Fiore. In their setting, one assumes a skew action $*: \A \x \V \rightarrow \A$ and a strong adjunction $F \dashv U: \V \rightarrow \A$. Then, $A * UB$ defines a left skew monoidal tensor on \mA with unit $FI$. 
	
	\textbf{Acknowledgements.} This paper is based on the author's master's thesis, which was supervised by John Bourke. The author acknowledges the support of Masaryk University under the grant MUNI/A/1569/2024. The author would like to thank her supervisor, the thesis opponent Soichiro Fujii, and the members of the Logic and Semantics Group at Tallinn University of Technology for all their helpful comments and suggestions. The author also gratefully acknowledges the valuable input of the anonymous referee.

	\section{Preliminary definitions}\label{Prelim}
	
	\subsection{Skew monoidal categories}
	
	\begin{definition}
		A \textit{left skew monoidal category} is given by a category \mA together with a product $\ox: \A \times \A \rightarrow \A$, unit $I \in \A$ and three natural families of morphisms
		\begin{align*}
			(A \ox B) \ox C &\xrightarrow{\gamma_{A,B,C}} A \ox (B \ox C)
			\tag{associator} \\
			I \ox A&\xrightarrow{\lambda_A} A  \tag{left unitor}\\
			A &\xrightarrow{\rho_A} A \ox I \tag{right unitor}
		\end{align*}
		satisfying the following five axioms:
		\[\begin{tikzcd} \label{LSkM1} \tag{LSkM1}
			& {(A \ox (B \ox C)) \ox D} \\
			{((A \ox B) \ox C) \ox D} && {A \ox ((B \ox C) \ox D)} \\
			\\
			{(A \ox B) \ox (C \ox D)} && {A \ox (B \ox (C \ox D))}
			\arrow["{\gamma_{A,B \ox C, D}}", from=1-2, to=2-3]
			\arrow["{\gamma_{A,B,C} \ox D}", from=2-1, to=1-2]
			\arrow["{\gamma_{A \ox B,C,D}}"', from=2-1, to=4-1]
			\arrow["{A \ox \gamma_{B,C,D}}", from=2-3, to=4-3]
			\arrow["{\gamma_{A,B,C \ox D}}"', from=4-1, to=4-3]
		\end{tikzcd}\]
		\[\begin{tikzcd} \label{LSkM2} \tag{LSkM2}
			& {A \ox B} \\
			{(A \ox B) \ox I} && {A \ox (B \ox I)}
			\arrow["{\rho_{A \ox B}}"', from=1-2, to=2-1]
			\arrow["{A \ox \rho_B}", from=1-2, to=2-3]
			\arrow["{\gamma_{A,B,I}}"', from=2-1, to=2-3]
		\end{tikzcd}\]
		\[\begin{tikzcd} \label{LSkM3} \tag{LSkM3}
			{(I \ox A) \ox B} && {I \ox (A \ox B)} \\
			& {A \ox B} 
			\arrow["{\gamma_{I,A,B}}", from=1-1, to=1-3]
			\arrow["{\lambda_A \ox B}"', from=1-1, to=2-2]
			\arrow["{\lambda_{A \ox B}}", from=1-3, to=2-2]
		\end{tikzcd}\]
		\[\begin{tikzcd} \label{LSkM4} \tag{LSkM4}
			& {I \ox I} \\
			I && I
			\arrow["{\lambda_I}", from=1-2, to=2-3]
			\arrow["{\rho_I}", from=2-1, to=1-2]
			\arrow[equal, from=2-1, to=2-3]
		\end{tikzcd}\]
		\[\begin{tikzcd} \label{LSkM5} \tag{LSkM5}
			{(A \ox I) \ox B} && {A \ox (I \ox B)} \\
			{A \ox B}         && {A \ox B}
			\arrow["{\gamma_{A,I,B}}", from=1-1, to=1-3]
			\arrow["{\rho_A \ox B}", from=2-1, to=1-1]
			\arrow["{A \ox \lambda_B}", from=1-3, to=2-3]
			\arrow[equal, from=2-1, to=2-3]
		\end{tikzcd}\]
	\end{definition}
	
	By reversing all arrows in the definition above, we obtain the definition of a\smc{r} \cat.
	
	\begin{remark}
		When generalizing the definition of a monoidal category by removing invertibility, one could also consider different orientations of the constraint morphisms and adjust the five axioms accordingly.
		
		The particular orientations of left/right skew monoidal categories provide several nice properties. These structures are self dual in the sense that if $(\A, \ox, J)$ is a left skew monoidal category, then $(\A^{\mathsf{op}}, \ox^{\mathsf{rev}}, J)$ with $A \ox^{\mathsf{rev}} B \defeq B \ox A$ is also a\smc{l} \cat.
		
		In a \skmon \cat, tensoring with the unit constitutes a co/monad. Namely in a\smc{l} \cat $(\A, \ox, J)$, the functor $(-) \ox J$ is a monad on \mA. By self-duality, the functor $J \ox (-)$ is then a comonad.
	\end{remark}
	
	\subsection{Strong actions of categories}
	
	Situations in which we have a monoidal category acting on another category in a \uv{monoidal way} (namely with invertible constraints) have been studied for a long time. A comprehensive treatment can be found for instance in \cite{CG22}. The notion of a skew monoidal category suggest the study of actions of a skew monoidal category which are skew in the sense that the constraints are generally not isomorphisms. This setting has been explored for instance by Szlachányi in \cite{Szl17}.
	
	Actions generally considered in this paper are in some sense a hybrid of these two notions. We will not make use of the invertibility of the constraint morphisms of the acting category, which allows us to consider skew monoidal categories. However, we will exploit the invertibility of the comparison morphisms of the action. This leads to the following notion, which we call a strong action.

	\begin{definition}\label{defAct}
		Let $(\V, \ox, I, a, \ell, r)$ be a \smc{l} category and \mA an ordinary category. A \textit{strong (left) action} of \mV on \mA is defined as a bifunctor
		\begin{align*}
			\V \times \A & \rightarrow \A \\
			(X, A)  & \mapsto X * A
		\end{align*}
		together with two natural families of isomorphisms
		\begin{align*}
			(X \ox Y) * A &\xrightarrow{m^{X,Y}_A} X * (Y * A) \tag{multiplicator}\\
			I * A &\xrightarrow{u_A} A \tag{unitor}
		\end{align*}
		such that \(*\) is compatible with the skew monoidal structure of \mV, i.e. the following coherence axioms are satisfied
		\[\begin{tikzcd} \label{LAct1} \tag{LAct1}
			& {(X \ox (Y \ox Z))*A} \\
			{((X \ox Y)\ox Z)*A} && {X * ((Y \ox Z)*A)}  \\
			\\
			{(X \ox Y) * (Z*A)} && {X*(Y*(Z*A))}
			\arrow["{m^{X,Y \ox Z}_{A}}", from=1-2, to=2-3]
			\arrow["{a_{X,Y,Z}*A}", from=2-1, to=1-2]
			\arrow["{m^{X\ox Y, Z}_{A}}"', from=2-1, to=4-1]
			\arrow["{X*m^{Y , Z}_{A}}", from=2-3, to=4-3]
			\arrow["{m^{X,Y}_{Z*A}}"', from=4-1, to=4-3]
		\end{tikzcd}\]
		\[\begin{tikzcd} \label{LAct2} \tag{LAct2}
			{(I \ox X)*A} && {I*(X*A)} \\
			& {X*A} 
			\arrow["{m^{I,X}_A}", from=1-1, to=1-3]
			\arrow["{\ell_X*A}"', from=1-1, to=2-2]
			\arrow["{u_{X*A}}", from=1-3, to=2-2]
		\end{tikzcd}\]
		\[\begin{tikzcd} \label{LAct3} \tag{LAct3}
			{X*A}         && {X*A}          \\
			{(X \ox I)*A} && {X*(I*A)}
			\arrow["{r_X*A}"', from=1-1, to=2-1]
			\arrow["{X*u_A}"', from=2-3, to=1-3]
			\arrow["{m^{X,I}_A}"', from=2-1, to=2-3]
			\arrow[equal, from=1-1, to=1-3]
		\end{tikzcd}\]
		We say that $(\A,*,m,u)$ is a strong (left) \mV-action, or simply that \mA is a strong (left) \mV-actegory.
	\end{definition}
	
	\begin{remark}
		Similarly, we can define a \textit{strong left action} of a right skew monoidal category -- here, we can consider multiplicator of form $X*(Y*A) \rightarrow (X \ox Y)*A$ and unitor of form $A \rightarrow I*A$ and define the corresponding action axioms by reversing the arrows in (\ref{LAct1}), (\ref{LAct2}) and (\ref{LAct3}).
		
		We will also consider \textit{strong right actions} of a left skew monoidal category \mV on \mA, which comprise a bifunctor
		\begin{align*}
			\A \times \V & \rightarrow \A \\
			(A, X)  & \mapsto A * X
		\end{align*}
		and for any $X,Y$ in \mV and $A$ in \mA a multiplicator morphism $(A*X)*Y \rightarrow A*(X \ox Y)$ and a unitor morphism $A \rightarrow A*I$ and three axioms analogous to (\ref{LAct1}), (\ref{LAct2}) and (\ref{LAct3}).
		
		Analogously, we may also define a strong right action of a right skew monoidal category.
	\end{remark}
	
	\begin{remark}
		If we take \mV to be monoidal in (\ref{defAct}), we recover the usual notion of a \textit{monoidal action} of \mV on \mA as defined for instance in \cite{CG22}.
	\end{remark}
	
	\section{Skew monoidal structures arising from actions}\label{MainResult}

	Given an object $J$ in \mA, let us denote by $J_*$ the functor $\V \rightarrow \A$ defined as $J_*X \defeq X * J$ for $X \in \V$.

	\begin{theorem}\label{TheThm}
		Let $\V = (\V, \ox, I, a, \ell, r)$ be a left skew monoidal category and $(\A, *, m, u)$ a strong left \mV-action. Suppose there exists an object $J \in \A$ such that $J_*$ has a left adjoint $J_! \dashv J_*$.
		
		Then there is a\smc{l} structure $(\A, \sk, J, \gamma, \lambda, \rho)$ on \mA defined in the following way. For any $A, B, C \in \A$, we put $A \sk B \defeq J_!A * B$
		\begin{align*}
			\gamma_{A,B,C} &\defeq J_!(J_!A*B)*C \xrightarrow{\tilde{\gamma}_{A,B}*C} (J_!A \ox J_!B)*C \xrightarrow{m^{J_!A, J_!B}_C} J_!A*(J_!B*C) \\
			\lambda_A &\defeq J_!J*A \xrightarrow{J_!u_J^{-1} * A} J_!(I*J)*A \xrightarrow{\epsilon_I * A} I*A \xrightarrow{u_A} A \\ 
			\rho_A &\defeq A \xrightarrow{\eta_A} J_!A*J
		\end{align*}
		where $\eta$ is the unit of $J_! \dashv J_*$ and $\tilde{\gamma}_{A,B}$ is the adjoint transpose of $\big(m^{J_!A, J_!B}_J\big)^{-1} \circ (J_!A * \eta_B)$:
		\begin{prooftree}
			\AxiomC{$J_!A*B \xrightarrow{J_!A * \eta_B} J_!A*(J_!B*J) \xrightarrow{\big(m^{J_!A, J_!B}_J\big)^{-1}} (J_!A \ox J_!B)*J$}
			\RightLabel{$J_! \dashv J_*$}
			\UnaryInfC{$\tilde{\gamma}_{A,B}: J_!(J_!A*B) \rightarrow J_!A \ox J_!B$}
		\end{prooftree}
	\end{theorem}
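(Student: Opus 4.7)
I would check the naturality of $\gamma,\lambda,\rho$ and each of the five axioms (\ref{LSkM1})--(\ref{LSkM5}) by unfolding the definitions and reducing to (i) the strong action axioms (\ref{LAct1})--(\ref{LAct3}), (ii) naturality of $m$, $u$, $\eta$, $\epsilon$, and (iii) the triangle identities for $J_!\dashv J_*$. The central device is adjoint transposition: a morphism $g:J_!X\to Y$ is determined by $(g*J)\circ\eta_X$, so equations involving $\tilde\gamma$ can be tested by postcomposing with $(-)*J$ and precomposing with $\eta$, after which the defining identity $(\tilde\gamma_{A,B}*J)\circ\eta_{J_!A*B}=(m^{J_!A,J_!B}_{J})^{-1}\circ(J_!A*\eta_B)$ applies.

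\textbf{Easy parts.} Naturality of $\rho=\eta$ is immediate; $\lambda$ is a composite of natural pieces in $A$; naturality of $\gamma$ reduces to naturality of $\tilde\gamma_{A,B}$ in $A$ and $B$, which follows from naturality of its transpose. Axiom (\ref{LSkM2}) is precisely the defining transposition equation for $\tilde\gamma_{A,B}$, hence holds by construction. For (\ref{LSkM4}), expanding gives $\lambda_J\circ\rho_J=u_J\circ(\epsilon_I*J)\circ(J_!u_J^{-1}*J)\circ\eta_J$; naturality of $\eta$ applied to $u_J^{-1}$ rewrites this as $u_J\circ(\epsilon_I*J)\circ\eta_{I*J}\circ u_J^{-1}$, and the triangle identity $(\epsilon_I*J)\circ\eta_{I*J}=\id_{I*J}$ collapses it to $\id_J$.

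\textbf{Unit axioms.} For (\ref{LSkM3}) and (\ref{LSkM5}), I would expand every $\gamma,\lambda,\rho$, collect the $\tilde\gamma$-factors, and---via transposition together with naturality of $\eta$---reduce each to an equality of composites in \mA that follows from (\ref{LAct2}), respectively (\ref{LAct3}), combined with the triangle identities. In both cases the specific instance of $\tilde\gamma$ needed (namely $\tilde\gamma_{J,A}$ or $\tilde\gamma_{A,J}$) is characterised through its transpose and then eliminated using the relevant unit axiom for the action.

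\textbf{Pentagon (main obstacle).} Axiom (\ref{LSkM1}) is the most delicate. After expanding all $\gamma$'s and applying naturality of $m^{-,-}_D$ in its first and second arguments to slide $\tilde\gamma_{A,B}$ and $\tilde\gamma_{B,C}$ across the appropriate $m$'s, both composites in the pentagon acquire the common prefix $(J_!A*m^{J_!B,J_!C}_D)\circ m^{J_!A,J_!B\ox J_!C}_D$---the right-hand side using (\ref{LAct1}) at $(J_!A,J_!B,J_!C,D)$ to rewrite $m^{J_!A,J_!B}_{J_!C*D}\circ m^{J_!A\ox J_!B,J_!C}_D$ as $(J_!A*m^{J_!B,J_!C}_D)\circ m^{J_!A,J_!B\ox J_!C}_D\circ(a_{J_!A,J_!B,J_!C}*D)$. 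Consequently, (\ref{LSkM1}) reduces to the coherence identity
\begin{equation*}
a_{J_!A,J_!B,J_!C}\circ(\tilde\gamma_{A,B}\ox J_!C)\circ\tilde\gamma_{J_!A*B,C} = (J_!A\ox\tilde\gamma_{B,C})\circ\tilde\gamma_{A,J_!B*C}\circ J_!\gamma_{A,B,C}
\end{equation*}
in \mV. I would prove this by transposing across $J_!\dashv J_*$: three applications of the defining equation for $\tilde\gamma$, naturality of $\eta$ against $\gamma_{A,B,C}$, and naturality of $m$ reduce both transposes to the same composite, with the final matching supplied by a second application of (\ref{LAct1}), now at $(J_!A,J_!B,J_!C,J)$. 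The chief subtlety is that $a$ is not assumed invertible in the skew monoidal \mV, so (\ref{LAct1}) must be applied in the direction in which it is stated; this is always possible because each occurrence of $a*J$ is adjacent to an $m^{-1}$ of matching shape that can be absorbed via the action pentagon.
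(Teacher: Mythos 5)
Your proposal is correct and takes essentially the same route as the paper: expand the definitions, transpose along $J_!\dashv J_*$ using the defining equation for $\tilde{\gamma}$, and reduce each axiom to (\ref{LAct1})--(\ref{LAct3}) plus naturality and the triangle identities, with (\ref{LSkM2}) holding by construction and (\ref{LSkM1}) using (\ref{LAct1}) twice (once with last variable $D$, once with $J$). The only organizational difference is that the paper factors the pentagon through the strength $\varsigma_{J_!A\ox J_!B,C}$ and two sub-diagrams, whereas you isolate the single oplax-associativity identity for $\tilde{\gamma}$ (the coherence of $\widehat{\phi}$ from Remark \ref{MonRem}) and transpose it wholesale -- the same computation, differently chopped up, and your reduction and the handling of the non-invertible $a$ both check out.
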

	\begin{remark}\label{strength}
		Morphisms of the form $J_!(J_!A*B) \rightarrow J_!A \ox J_!B$ are sometimes referred to as a \textit{fusion map}. Observe that the construction of the fusion map $\tilde{\gamma}$ defined in the theorem can be easily generalized, if instead of $J_!A$ we consider arbitrary objects of \mV.
		\begin{prooftree}
			\AxiomC{$X*B \xrightarrow{X * \eta_B} X*(J_!B*J) \xrightarrow{\big(m^{X, J_!B}_J\big)^{-1}} (X \ox J_!B)*J$}
			\RightLabel{$J_! \dashv J_*$}
			\UnaryInfC{$\varsigma_{X,B}: J_!(X*B) \rightarrow X \ox J_!B$}
		\end{prooftree}
		The map $\varsigma$ is called a \textit{strength}. Morphism families of this form play an important role in the definition of functors between \mV-actegories \cite[Definition 2.7]{Szl17}.
		
		A useful observation is that the definition of $\varsigma$ (or $\tilde{\gamma}$) via adjoint transposition entails the two following equations.
		\[\begin{tikzcd}\label{AT*}\tag{AT*}
			{J_!(X*B)} && {X \ox J_!B} \\
			\\
			{J_!(X*(J_!B*J))} && {J_!((X \ox J_!B)*J)}
			\arrow["{\varsigma_{X,B}}", from=1-1, to=1-3]
			\arrow["{J_!(X*\eta_B)}"', from=1-1, to=3-1]
			\arrow["{J_!\inv{m_J^{X,J_!B}}}"', from=3-1, to=3-3]
			\arrow["{\epsilon_{X \ox J_!B}}"', from=3-3, to=1-3]
		\end{tikzcd}\]
		\[\begin{tikzcd}\label{AT}\tag{AT}
			{X*B} && {J_!(X*B)*J} \\
			\\
			{X*(J_!B*J)} && {(X \ox J_!B)*J}
			\arrow["{\eta_{X*B}}", from=1-1, to=1-3]
			\arrow["{X*\eta_B}"', from=1-1, to=3-1]
			\arrow["{\varsigma_{X,B}*J}", from=1-3, to=3-3]
			\arrow["{\inv{m_J^{X,J_!B}}}"', from=3-1, to=3-3]
		\end{tikzcd}\]
		
	\end{remark}
	\begin{proof}
\textbf{(\ref{LSkM1})} The first axiom transcribes to:
		\[\begin{tikzcd}[cramped]
			& {J_!(J_!A*(J_!B*C))*D}              \\
			{J_!(J_!(J_!A*B)*C)*D} && {J_!A*J_!(J_!B*C)*D}  \\
			\\
			{J_!(J_!A*B)*(J_!C*D)} && {J_!A*(J_!B*(J_!C*D))}
			\arrow["{\gamma_{A,J_!B*C, D}}", from=1-2, to=2-3]
			\arrow["{J_!\gamma_{A,B,C} * D}", from=2-1, to=1-2]
			\arrow["{\gamma_{J_!A*B,C,D}}"', from=2-1, to=4-1]
			\arrow["{J_!A * \gamma_{B,C,D}}", from=2-3, to=4-3]
			\arrow["{\gamma_{A,B,J_!C*D}}"', from=4-1, to=4-3]
		\end{tikzcd}\]
		
		If we write out the associator maps $\gamma$ as they are defined, then among others, vertices $J_!((J_!A \ox J_!B)*C)*D$ and $(J_!A \ox (J_!B \ox J_!C))*D$ appear. Notice that these can be connected via the strength $\varsigma_{J_!A \ox J_!B, C}*D$. After expanding the diagram, using the definition of $\gamma$, the diagram looks as follows.
		\[
		\begin{tikzcd}[cramped]
			{{J_!(J_!A*(J_!B*C))*D}}    && {(J_!A \ox J_!(J_!B*C))*D}     && {J_!A*(J_!(J_!B*C)*D)}     \\
			\\
			{J_!((J_!A \ox J_!B)*C)*D}  && {(J_!A \ox (J_!B \ox J_!C))*D} && {J_!A*((J_!B \ox J_!C)*D)} \\
			\\
			{J_!(J_!(J_!A*B)*C)*D}      && {((J_!A \ox J_!B) \ox J_!C)*D} && {J_!A*(J_!B*(J_!C*D))}     \\
			\\
			{(J_!(J_!A*B) \ox J_!C) *D} && {J_!(J_!A*B)*(J_!C*D)}         && {(J_!A \ox J_!B)*(J_!C*D)}
			\arrow["{\tilde{\gamma}_{A,J_!B*C}*D}", from=1-1, to=1-3]
			\arrow["{(J_!A \ox \tilde{\gamma}_{B,C})*D}", from=1-3, to=3-3]
			\arrow["{m^{J_!A,J_!(J_!B*C)}_D}", from=1-3, to=1-5]
			\arrow["{J_!m^{J_!A,J_!B}_C*D}", from=3-1, to=1-1]
			\arrow["{\varsigma_{J_!A \ox J_!B, C} * D}", from=3-1, to=5-3]
			\arrow["{a_{J_!A,J_!B,J_!C}*D}"', from=5-3, to=3-3]
			\arrow["{m^{J_!A,J_!B \ox J_!C}_{D}}", from=3-3, to=3-5]
			\arrow["{J_!A*(\tilde{\gamma}_{B,C}*D)}", from=1-5, to=3-5]
			\arrow["{J_!(\tilde{\gamma}_{A,B}*C)*D}", from=5-1, to=3-1]
			\arrow["{\tilde{\gamma}_{J_!A*B,C}*D}"', from=5-1, to=7-1]
			\arrow["{m^{J_!A \ox J_!B, J_!C}_{D}}", from=5-3, to=7-5]
			\arrow["{J_!A*m^{J_!B,J_!C}_D}", from=3-5, to=5-5]
			\arrow["{(\tilde{\gamma}_{A,B} \ox J_!C) * D}"', from=7-1, to=5-3]
			\arrow["{m^{J_!(J_!A*B),J_!C}_D}"', from=7-1, to=7-3]
			\arrow["{m^{J_!A,J_!B}_{J_!C*D}}"', from=7-5, to=5-5]
			\arrow["{\tilde{\gamma}_{A,B}*(J_!C*D)}"', from=7-3, to=7-5]
		\end{tikzcd}
		\]
		
		Naturality of $m$ ensures commutativity of the upper right square and the square at the bottom. The pentagon on the right is an instance of (\ref{LAct1}). It hence remains to deal with the square and the pentagon on the left.
		
		For the square, it suffices to show that the following commutes.
		\[\begin{tikzcd}
			{J_!(J_!(J_!A*B)*C)}     && {J_!(J_!A*B) \ox J_!C}     \\
			\\
			{J_!((J_!A \ox J_!B)*C)} && {(J_!A \ox J_!B) \ox J_!C}		
			\arrow["{\tilde{\gamma}_{J_!A*B,C}}", from=1-1, to=1-3]
			\arrow["{\tilde{\gamma}_{A,B} \ox J_!C}", from=1-3, to=3-3]
			\arrow["{J_!(\tilde{\gamma}_{A,B}*C)}"', from=1-1, to=3-1]
			\arrow["{\varsigma_{J_!A \ox J_!B, C}}"', from=3-1, to=3-3]
		\end{tikzcd}\]
		Now, we can take the adjoint transpose of this square. This gives us the following diagram, which can be filled in to form two squares commuting because of naturality of $m$ and functoriality of $*$.
		\[\adjustbox{scale=0.95}{\begin{tikzcd}[cramped]
				{J_!(J_!A*B)*C}     && J_!(J_!A*B)*(J_!C*J)     && {(J_!(J_!A*B) \ox J_!C)*J}     \\
				\\
				{(J_!A \ox J_!B)*C} && (J_!A \ox J_!B)*(J_!C*J) && {((J_!A \ox J_!B) \ox J_!C)*J}		
				\arrow["{J_!(J_!A*B) * \eta_C}", from=1-1, to=1-3]
				\arrow["{\inv{m^{J_!(J_!A*B), J_!C}_J}}", from=1-3, to=1-5]
				\arrow["{(\tilde{\gamma}_{A,B} \ox J_!C)*J}", from=1-5, to=3-5]
				\arrow["{\tilde{\gamma}_{A,B}*C}"', from=1-1, to=3-1]
				\arrow["{(J_!A \ox J_!B) * \eta_C}"', from=3-1, to=3-3]
				\arrow["{\inv{m^{J_!A \ox J_!B, J_!C}_J}}"', from=3-3, to=3-5]
				\arrow["{\tilde{\gamma}_{A,B}*(J_!C*J)}", from=1-3, to=3-3]
		\end{tikzcd}}\]
		
		For the remaining pentagon, it is enough to show that the following commutes.
		\[\begin{tikzcd}\label{PentagonWhichWillBeReferencedLater}
			{J_!(J_!A*(J_!B*C))}       && {J_!A \ox J_!(J_!B * C)}   \\
			                           && {J_!A \ox (J_!B \ox J_!C)} \\
			{J_!((J_!A \ox J_!B) * C)} && {(J_!A \ox J_!B) \ox J_!C}
			\arrow["{\tilde{\gamma}_{A,J_!B*C}}", from=1-1, to=1-3]
			\arrow["{J_!m^{J_!A, J_!B}_C}", from=3-1, to=1-1]
			\arrow["{J_!A \ox \tilde{\gamma}_{B,C}}", from=1-3, to=2-3]
			\arrow["{a_{J_!A, J_!B, J_!C}}"', to=2-3, from=3-3]
			\arrow["{\varsigma_{J_!A \ox J_!B, C}}"', from=3-1, to=3-3]
		\end{tikzcd}\]
		After transposing, we get
		\[\begin{tikzcd}[cramped]
			{J_!A*(J_!B*C)}       && {J_!A * (J_!(J_!B*C)*J)}   && {(J_!A \ox J_!(J_!B*C))*J}     \\
			                      &&                            && {(J_!A \ox (J_!B \ox J_!C))*J} \\
			{(J_!A \ox J_!B) * C} && {(J_!A \ox J_!B)*(J_!C*J)} && {((J_!A \ox J_!B) \ox J_!C)*J}
			\arrow["{m^{J_!A, J_!B}_C}", from=3-1, to=1-1]
			\arrow["{J_!A * \eta_{J_!B*C}}", from=1-1, to=1-3]
			\arrow["{\inv{m^{J_!A, J_!(J_!B*C)}_J}}", from=1-3, to=1-5]
			\arrow["{(J_!A \ox \tilde{\gamma}_{B,C})*J}", from=1-5, to=2-5]
			\arrow["{a_{J_!A, J_!B, J_!C}*J}"', from=3-5, to=2-5]
			\arrow["{(J_!A \ox J_!B) * \eta_{C}}"', from=3-1, to=3-3]
			\arrow["{\inv{m^{J_!A \ox J_!B, J_!C}_J}}"', from=3-3, to=3-5]
		\end{tikzcd}\]
		which can be filled in as
		\[\begin{tikzcd}[cramped]
			{J_!A*(J_!B*C)}       && {J_!A * (J_!(J_!B*C)*J)}   && {(J_!A \ox J_!(J_!B*C))*J}     \\
			\\
			&& {J_!A*((J_!B \ox J_!C)*J)} && {(J_!A \ox (J_!B \ox J_!C))*J} \\
			\\
			&& {J_!A*(J_!B*(J_!C*J))}                                       \\
			\\
			{(J_!A \ox J_!B) * C} && {(J_!A \ox J_!B)*(J_!C*J)} && {((J_!A \ox J_!B) \ox J_!C)*J}
			\arrow["{m^{J_!A, J_!B}_C}", from=7-1, to=1-1]
			\arrow["{J_!A * \eta_{J_!B*C}}", from=1-1, to=1-3]
			\arrow["{\inv{m^{J_!A, J_!(J_!B*C)}_J}}", from=1-3, to=1-5]
			\arrow["{(J_!A \ox \tilde{\gamma}_{B,C})*J}", from=1-5, to=3-5]
			\arrow["{a_{J_!A, J_!B, J_!C}*J}"', from=7-5, to=3-5]
			\arrow["{(J_!A \ox J_!B) * \eta_{C}}"', from=7-1, to=7-3]
			\arrow["{\inv{m^{J_!A \ox J_!B, J_!C}_J}}", from=7-3, to=7-5]
			\arrow["{J_!A*(\tilde{\gamma}_{B,C}*J)}", from=1-3, to=3-3]
			\arrow["{\inv{m^{J_!A, J_!B \ox J_!C}_J}}", from=3-3, to=3-5]
			\arrow["{J_!A*(J_!B*\eta_c)}"', from=1-1, to=5-3]
			\arrow["{J_!A*\inv{m^{J_!B,J_!C}_J}}"', from=5-3, to=3-3]
			\arrow["{\inv{m^{J_!A,J_!B}_{J_!C*J}}}", from=5-3, to=7-3]
		\end{tikzcd}\]
		The pentagon is an instance of \ref{LAct1}, the upper right square and the lower left square commute as $m$ is natural.
		Notice that the upper left square can be obtained by applying $J_!A*(-)$ to the equation (\ref{AT}) where $X \defeq J_!B$ and $B \defeq C$. This concludes the proof of the first axiom.

\textbf{(\ref{LSkM2})} becomes
		\[\begin{tikzcd}
			& {J_!A * B}   \\
			{J_!(J_!A*B)*J} && {J_!A*(J_!B*J)} \\
			\arrow["{\eta_{J_!A*B}}"', from=1-2, to=2-1]
			\arrow["{J_!A*\eta_B}", from=1-2, to=2-3]
			\arrow["{\gamma_{A,B,J}}"', from=2-1, to=2-3]
		\end{tikzcd}\]
		which can be written out as
		\[\begin{tikzcd}
			& {J_!A * B}              \\
			{J_!(J_!A*B)*J} && {J_!A*(J_!B*J)} \\
			& {(J_!A \ox J_!B)*J}
			\arrow["{\eta_{J_!A*B}}"', from=1-2, to=2-1]
			\arrow["{J_!A*\eta_B}", from=1-2, to=2-3]
			\arrow["{\tilde{\gamma}_{A,B}*J}"', from=2-1, to=3-2]
			\arrow["{m^{J_!A,J_!B}_J}"', to=2-3, from=3-2]
		\end{tikzcd}\]
		Observe that this is an instance of (\ref{AT}).

\textbf{(\ref{LSkM3})} transcribes to
		\[\begin{tikzcd}
			{J_!(J_!J*A)*B} && {J_!J*(J_!A*B)} \\
			& {J_!A*B}
			\arrow["{\gamma_{J,A,B}}", from=1-1, to=1-3]
			\arrow["{J_!\lambda_A*B}"', from=1-1, to=2-2]
			\arrow["{\lambda_{J_!A*B}}", from=1-3, to=2-2]	
		\end{tikzcd}\]
		After expanding this, we get
		\[\begin{tikzcd}[cramped]
			{J_!(J_!J*A)*B}     && {(J_!J \ox J_!A)*B}     && {J_!J*(J_!A*B)}     \\
			\\
			{J_!(J_!(I*J)*A)*B} && {(J_!(I*J) \ox J_!A)*B} && {J_!(I*J)*(J_!A*B)} \\
			\\
			{J_!(I*A)*B}        && {(I \ox J_!A)*B}        && {I*(J_!A*B)}        \\
			\\
			&& {J_!A*B}  
			\arrow["{\tilde{\gamma}_{J,A}*B}", from=1-1, to=1-3]
			\arrow["{J_!(J_!u_J^{-1}*A)*B}"', from=1-1, to=3-1]
			\arrow["{J_!(\epsilon_I*A)*B}"', from=3-1, to=5-1]
			\arrow["{J_!u_A*B}"', from=5-1, to=7-3]
			\arrow["{(J_!u_J^{-1} \ox J_!A)*B}"', from=1-3, to=3-3]
			\arrow["{(\epsilon_I \ox J_!A)*B}"', from=3-3, to=5-3]
			\arrow["{J_!u_J^{-1}*(J_!A*B)}", from=1-5, to=3-5]
			\arrow["{\epsilon_I*(J_!A*B)}", from=3-5, to=5-5]
			\arrow["{u_{J_!A*B}}", from=5-5, to=7-3]
			\arrow["{m^{J_!J,J_!A}_B}", from=1-3, to=1-5]
			\arrow["{m^{J_!(I*J),J_!A}_B}", from=3-3, to=3-5]
			\arrow["{m^{I,J_!A}_B}", from=5-3, to=5-5]
			\arrow["{\ell_{J_!A}*B}", from=5-3, to=7-3]
		\end{tikzcd}\]
		The triangle is an instance of (\ref{LAct2}) and the squares on the right side commute because $m$ is natural. It suffices to deal with the following diagram.
		\[\begin{tikzcd}
			{J_!(J_!J*A)} && {J_!J \ox J_!A}     \\
			&& {J_!(I*J) \ox J_!A} \\
			&& {I \ox J_!A}        \\
			{J_!A}        && {J_!A}          
			\arrow["{J_!\lambda_A}"', from=1-1, to=4-1]
			\arrow["{\tilde{\gamma}_{J,A}}", from=1-1, to=1-3]
			\arrow["{J_!u_J^{-1} \ox J_!A}", from=1-3, to=2-3]
			\arrow["{\epsilon_I \ox J_!A}", from=2-3, to=3-3]
			\arrow["{\ell_{J_!A}}", from=3-3, to=4-3]
			\arrow[equal, from=4-1, to=4-3]		         
		\end{tikzcd}\]
		Under transposing, it becomes
		\[\begin{tikzcd}
			{J_!J*A} && {J_!J*(J_!A*J)} && {(J_!J \ox J_!A)*J}     \\
			&&                 && {(J_!(I*J) \ox J_!A)*J} \\
			&&                 && {(I \ox J_!A)*J}        \\
			{A}      &&                 && {J_!A*J}
			\arrow["{\lambda_A}"', from=1-1, to=4-1]
			\arrow["{J_!J*\eta_A}", from=1-1, to=1-3]
			\arrow["{\big(m^{J_!J,J_!A}_J\big)^{-1}}", from=1-3, to=1-5]
			\arrow["{(J_!u_J^{-1} \ox J_!A})*J", from=1-5, to=2-5]
			\arrow["{(\epsilon_I \ox J_!A)*J}", from=2-5, to=3-5]
			\arrow["{\ell_{J_!A}*J}", from=3-5, to=4-5]
			\arrow["{\eta_A}"', from=4-1, to=4-5]
		\end{tikzcd}\]
		If we fill in other maps we have, we obtain
		\[\begin{tikzcd}
			{J_!J*A}     && {J_!J*(J_!A*J)}     && {(J_!J \ox J_!A)*J}     \\
			\\
			{J_!(I*J)*A} && {J_!(I*J)*(J_!A*J)} && {(J_!(I*J) \ox J_!A)*J} \\
			\\
			{I*A}        && {I*(J_!A*J)}        && {(I \ox J_!A)*J}        \\
			\\
			{A}          &&                     && {J_!A*J}
			\arrow["{J_!J*\eta_A}", from=1-1, to=1-3]
			\arrow["{J_!u_J^{-1}*A}"', from=1-1, to=3-1]
			\arrow["{\epsilon_I*A}"', from=3-1, to=5-1]
			\arrow["{u_A}"', from=5-1, to=7-1]
			\arrow["{(J_!u_J^{-1} \ox J_!A)*J}", from=1-5, to=3-5]
			\arrow["{(\epsilon_I \ox J_!A)*J}", from=3-5, to=5-5]
			\arrow["{J_!u_J^{-1}*(J_!A*J)}", from=1-3, to=3-3]
			\arrow["{\epsilon_I*(J_!A*J)}", from=3-3, to=5-3]
			\arrow["{\ell_{J_!A}*J}", from=5-5, to=7-5]
			\arrow["{\inv{m^{J_!J,J_!A}_J}}", from=1-3, to=1-5]
			\arrow["{\inv{m^{J_!(I*J),J_!A}_J}}", from=3-3, to=3-5]
			\arrow["{\inv{m^{I,J_!A}_J}}", from=5-3, to=5-5]
			\arrow["{\eta_A}"', from=7-1, to=7-5]
			\arrow["{J_!(I*J)*\eta_A}", from=3-1, to=3-3]
			\arrow["{I*\eta_A}", from=5-1, to=5-3]
			\arrow["{u_{J_!A*J}}"', from=5-3, to=7-5]
		\end{tikzcd}\]
		In this diagram, the triangle is \ref{LAct2} and commutativity of all the squares follows from naturality of the involved morphism families and functoriality of $*$.
		
		\textbf{(\ref{LSkM4})} now looks as
		\[\begin{tikzcd}
			J &&&& J \\
			{J_!J*J} && {J_!(I*J)*J} && {I*J}
			\arrow[equal, from=1-1, to=1-5]
			\arrow["{\eta_J}"', from=1-1, to=2-1]
			\arrow["{J_!u_J^{-1}*J}"', from=2-1, to=2-3]
			\arrow["{\epsilon_I*J}"', from=2-3, to=2-5]
			\arrow["{u_J}"', from=2-5, to=1-5]
		\end{tikzcd}\]
		This diagram can be seen to commute because of naturality of $\eta$ and one of the triangle identities of $J_! \dashv J_*$, as demonstrated in the following.
		\[\begin{tikzcd}
			& J \\
			{J_!J*J} & {I*J} & J \\
			{J_!(I*J)*J} && {I*J}
			\arrow["{\eta_J}"', from=1-2, to=2-1]
			\arrow["{u_J^{-1}}"', from=1-2, to=2-2]
			\arrow[equal, from=1-2, to=2-3]
			\arrow["{J_!u_J^{-1}*J}"', from=2-1, to=3-1]
			\arrow["{\eta_{I*J}}", from=2-2, to=3-1]
			\arrow[equal, from=2-2, to=3-3]
			\arrow["{\epsilon_I*J}"', from=3-1, to=3-3]
			\arrow["{u_J}"', from=3-3, to=2-3]
		\end{tikzcd}\]

\textbf{(\ref{LSkM5})} The fifth axiom transcribes as 
		\[\begin{tikzcd}
			{J_!A*B}        && {J_!A*B}             \\
			{J_!(J_!A*J)*B} && {J_!A*(J_!J*B)}
			\arrow["{J_!\eta_A*B}"', from=1-1, to=2-1]
			\arrow["{\gamma_{A,J,B}}"', from=2-1, to=2-3]
			\arrow["{J_!A*\lambda_B}"', from=2-3, to=1-3]
			\arrow[equal, from=1-1, to=1-3]
		\end{tikzcd}\]
		This diagram can be filled in as follows.
		\[\begin{tikzcd}[cramped]
			{J_!(J_!A*J)*B} && {(J_!A \ox J_!J)*B}     && {J_!A*(J_!J*B)}     \\
			\\
			&& {(J_!A \ox J_!(I*J))*B} && {J_!A*(J_!(I*J)*B)} \\
			\\
			&& {(J_!A \ox I)*B}        && {J_!A*(I*B)}        \\
			\\
			{J_!A*B}        && {J_!A*B}                && {J_!A*B}
			\arrow["{\tilde{\gamma}_{A,J}*B}", from=1-1, to=1-3]
			\arrow["{J_!\eta_A*B}", from=7-1, to=1-1]
			\arrow["{(J_!A \ox J_!u_J^{-1})*B}"', from=1-3, to=3-3]
			\arrow["{(J_!A \ox \epsilon_I)*B}"', from=3-3, to=5-3]
			\arrow["{J_!A*(J_!u_J^{-1}*B)}", from=1-5, to=3-5]
			\arrow["{J_!A*(\epsilon_I*B)}", from=3-5, to=5-5]
			\arrow["{m^{J_!A,J_!J}_B}", from=1-3, to=1-5]
			\arrow["{m^{J_!A,J_!(I*J)}_B}", from=3-3, to=3-5]
			\arrow["{J_!A*u_B}", from=5-5, to=7-5]
			\arrow["{r_{J_!A}*B}", from=7-3, to=5-3]
			\arrow["{m^{J_!A,I}_B}"', to=5-5, from=5-3]
			\arrow[equal, from=7-1, to=7-3]
			\arrow[equal, from=7-3, to=7-5]
		\end{tikzcd}\]
		The lower right square is \ref{LAct3} and the other squares on the right commute because of naturality of $m$. The remaining pentagon
		\[\begin{tikzcd}
			{J_!A}        && {J_!A}              \\
			&& {J_!A \ox I}        \\
			&& {J_!A \ox J_!(I*J)} \\
			{J_!(J_!A*J)} && {J_!A \ox J_!J}  
			\arrow["{J_!\eta_A}"', from=1-1, to=4-1]
			\arrow["{\tilde{\gamma}_{A,J}}"', from=4-1, to=4-3]
			\arrow["{r_{J_!A}}", from=1-3, to=2-3]
			\arrow["{J_!A \ox J_!u_J^{-1}}"', from=4-3, to=3-3]
			\arrow["{J_!A \ox \epsilon_I}"', from=3-3, to=2-3]
			\arrow[equal, from=1-1, to=1-3]
		\end{tikzcd}\]
		becomes the following under transposing
		\[\begin{tikzcd}
			{A}      &&                 && {(J_!A)*J}              \\
			&&                 && {(J_!A \ox I)*J}        \\
			&&                 && {(J_!A \ox J_!(I*J))*J} \\
			{J_!A*J} && {J_!A*(J_!J*J)} && {(J_!A \ox J_!J)*J}
			\arrow["{\eta_A}", from=1-1, to=1-5]
			\arrow["{\eta_A}"', from=1-1, to=4-1]
			\arrow["{r_{J_!A}*J}", from=1-5, to=2-5]
			\arrow["{(J_!A \ox J_!u_J^{-1})*J}"', from=4-5, to=3-5]
			\arrow["{(J_!A \ox \epsilon_I)*J}"', from=3-5, to=2-5]
			\arrow["{J_!A*\eta_J}"', from=4-1, to=4-3]
			\arrow["{\big(m^{J_!A,J_!B}_J\big)^{-1}}"', from=4-3, to=4-5]
		\end{tikzcd}\]
		After filling in additional morphisms, we get
		\[\begin{tikzcd}
			{J_!A*J}         && {J_!A*(J_!J*J)}     && {(J_!A \ox J_!J)*J}     \\
			\\
			&& {J_!A*(J_!(I*J)*J)} && {(J_!A \ox J_!(I*J))*J} \\
			\\
			&& {J_!A*(I*J)}                                   \\                
			{(J_!A \ox I)*J} &&                     && {(J_!A \ox I)*J}
			\arrow["{J_!A*\eta_J}", from=1-1, to=1-3]
			\arrow["{r_{J_!A}*J}"', from=1-1, to=6-1]
			\arrow["{J_!A*u_J^{-1}}"', from=1-1, to=5-3]
			\arrow["{\inv{m^{J_!A,J_!J}_J}}", from=1-3, to=1-5]
			\arrow["{\inv{m^{J_!A,J_!(I*J)}_J}}", from=3-3, to=3-5]
			\arrow["{\inv{m^{J_!A,I}_J}}", from=5-3, to=6-5]
			\arrow["{J_!A*(J_!u_J^{-1}*J)}", from=1-3, to=3-3]
			\arrow["{J_!A*(\epsilon_I*J)}", from=3-3, to=5-3]
			\arrow["{(J_!A \ox J_!u_J^{-1})*J}", from=1-5, to=3-5]
			\arrow["{(J_!A \ox \epsilon_I)*J}", from=3-5, to=6-5]
			\arrow["{m^{J_!A,I}_J}", to=5-3, from=6-1]
			\arrow[equal, from=6-1, to=6-5]
		\end{tikzcd}\]
		Here the triangle on the lower right is \ref{LAct3}, commutativity of the squares on the right follows from naturality of $m$. The upper left square can be recognized to be the axiom (\ref{LSkM4}).
	\end{proof}

	\begin{remark}
		Note that in the proof of the theorem above, we did not make use of the coherence axioms of the skew monoidal category \mV. Furthermore, observe that the proof does not directly depend on the orientations of the constraint morphisms of \mV. If we reverse $a$, $\ell$ or $r$ and adjust the strong action axioms correspondingly, we would still be able to prove the statement of (\ref{TheThm}) -- in the proof, we do not take adjoint transposes of $a$, $\ell$ and $r$. Hence, we could start with a strong action of a right skew monoidal category \mV.\footnote{Note that the theorem could be applied even for a strong \mV-action, where \mV would have a different combination of the orientations of the constraint morphisms $a, \ell, r$. That is, if \mV would be a generalization of a monoidal category with noninvertible constraint morphisms (with any orientations) and five axioms obtained by adjusting MacLane's five axioms for monoidal categories to given orientations of $a, \ell, r$.}
	\end{remark}
	
	\begin{remark}
		In (\ref{TheThm}), we started with a left action and assumed the existence of a left adjoint of the functor $J_*$. We could also consider right actions or the existence of a right adjoint to the action induced functor. In all cases, we would still obtain a skew monoidal structure on the actegory via an analogous construction.
		
		Let us denote, for an object $J$ in \mA, by $J_*$ the functor induced by a left action
		\[J_*: \V \rightarrow \A, \quad X \mapsto X*J\]
		and by $J^*$ the functor induced by a right action
		\[J^*: \V \rightarrow \A, \quad X \mapsto J*X\]
		We will also use the following notation for functors adjoint to those
		\[  J_! \dashv J_*, \quad J_* \dashv J_\#,\quad J^! \dashv J^*,\quad J^* \dashv J^\#  \]
		
		Under this convention, we can summarize the possible variations of (\ref{TheThm}) in the table (\ref{VarThm}).
		
		\begin{table}[htbp]
			\centering
			\begin{tblr}{
					colspec={l|c|c|},
					hline{2,4,6},
					cell{2}{1} = {r=2}{m},
					cell{4}{1} = {r=2}{m}
				}
				& Left adjoint        & Right adjoint       \\
				Left action  & $J_!A*B$            & $J_\#A*B$           \\
				& left skew monoidal  & right skew monoidal \\
				Right action & $A*J^!B$            & $A*J^\#B$           \\
				& right skew monoidal & left skew monoidal  \\
			\end{tblr}
			\caption{Variations of Theorem (\ref{TheThm}). The first row in each cell shows how the skew-tensor $A \sk B$ is defined, the second row tell what kind of skew monoidal structure is obtained this way.}\label{VarThm}
		\end{table}

	\end{remark}
	
	\begin{remark}\label{resSzl}
		A result closely related to (\ref{TheThm}) was proved by Szlachányi in Proposition 6.1. of \cite{Szl17}. He assumes that \mV is a symmetric monoidal closed category with all small limits and colimits and proves that if \mA is a tensored \mV-category, then for every $R$ in \mA, there is a skew monoidal structure on \mA with unit $R$.
		
		Section 2.2. of that paper demonstrates that tensored \mV-categories are equivalent to \mV-actegories \mA such that for any $A$ in \mA the functor $(-)*A: \V \rightarrow \A$ has a right adjoint. Under this correspondence, Szlachányi's induced skew monoidal structure coincides with the structure defined as in our main result (or rather as in one of the variants described in Table (\ref{VarThm})).
		
		Theorem (\ref{TheThm}) can be therefore seen as a generalization of Szlachányi's result, since it allows actegories in which only one object $J$ in \mA gives rise to a functor $(-)*J$ with an adjoint. This generalization is important to capture the example on functor categories (see Example (\ref{exFun})) or examples of structures arising from warping by an opmonoidal monad (see Example (\ref{exSkW})).
	\end{remark}

	\begin{remark}\label{AdjApp}
		In the assumptions of Theorem (\ref{TheThm}), we start by fixing an object $J$ and ask for the induced functor $J_*$ to have an adjoint. In the case that \mV is monoidal, this assumption can be reformulated in such a way that there is no need to fix an object. Instead, we may consider adjunctions $L \dashv R: \V \rightarrow \A$ and require that a certain condition be satisfied. This approach was suggested to the author by Nathanael Arkor.
		
		Let $(\V,\ox,I)$ be a monoidal category and \mA a \mV-actegory. Because \mV is monoidal, it can be seen as a strong \mV-actegory (even monoidal actegory). Suppose we have an adjunction
		\[\begin{tikzcd}
			\A && \V
			\arrow[""{name=0, anchor=center, inner sep=0}, "{L}", bend left, from=1-1, to=1-3]
			\arrow[""{name=1, anchor=center, inner sep=0}, "{R}", bend left, from=1-3, to=1-1]
			\arrow["\dashv"{anchor=center, rotate=-90}, draw=none, from=0, to=1]
		\end{tikzcd}\]
		where $R$ is a strong morphism of \mV-actegories, i.e. it is equipped with an isomorphism
		\[ R(X \ox Y) \xrightarrow{\cong} X * RY \]
		
		Now, if we set $J \defeq RI$, we have for any $X$ in \mV
		\[ RX \cong R(X \ox I) \cong X * RI = X * J = J_*X \]
		
		Hence, we may apply Theorem (\ref{TheThm}) and define a left skew monoidal structure on \mA by setting $$A \sk B \defeq LA*B.$$

		In fact, one can observe that if \mV is a monoidal category, every strong morphism $R: \V \rightarrow \A$ of \mV-actegories is up to isomorphism of form $(-)*J$ for some $J$ in \mA. (This follows from the fact that \mV is the free strong \mV-actegory on $1$.) Therefore, the assumption that there exists an adjunction $L \dashv R: \V \rightarrow \A$ such that $R$ is a strong morphism of \mV-actegories is indeed equivalent to the assumption that there exists an object $J$ in \mA such that $(-)*J$ has a left adjoint.

	\end{remark}

	\section{Monoidality of $J_! \dashv J_*$}\label{MonoidalitySection}
	
	\begin{proposition}\label{Monoidality}
		The adjunction $J_! \dashv J_*$ as in (\ref{TheThm}) is monoidal with respect to $\sk$ and $\ox$.
	\end{proposition}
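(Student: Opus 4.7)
The plan is to show that $J_!$ is an opmonoidal functor and that $J_*$ inherits the mate lax monoidal structure, so that the unit and counit of the adjunction are monoidal natural transformations. Concretely, I would equip $J_!$ with the opmonoidal structure whose binary coherence is $\tilde{\gamma}_{A,B} \colon J_!(A \sk B) \to J_!A \ox J_!B$ and whose nullary coherence is the composite $J_!J \xrightarrow{J_!u_J^{-1}} J_!(I*J) \xrightarrow{\epsilon_I} I$. These are precisely the components that already appear in the definitions of $\gamma$, $\lambda$ and $\rho$ for the skew structure on $\A$ in Theorem \ref{TheThm}.

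The main task is then to verify the three opmonoidal axioms for $J_!$ -- one associativity pentagon and two unit triangles, all living in $\V$. The key observation is that these coherence diagrams already appear, embedded inside the larger diagrams used in the proof of Theorem \ref{TheThm}. For instance, the opmonoidal associativity reduces exactly to the square relating $\tilde{\gamma}_{J_!A*B,C}$ with $\tilde{\gamma}_{A,B} \ox J_!C$, together with the pentagon relating $\tilde{\gamma}_{A,J_!B*C}$ with $J_!A \ox \tilde{\gamma}_{B,C}$; both of these are established, via the strength $\varsigma$, as part of the proof of (\ref{LSkM1}). The opmonoidal unit axioms likewise reduce, after adjoint transposition, to fusion-map identities already verified inside the proofs of (\ref{LSkM3}), (\ref{LSkM4}) and (\ref{LSkM5}). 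So essentially nothing new needs to be calculated: one only has to isolate the correct sub-diagrams.

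With $J_!$ opmonoidal in hand, I would take the mate lax monoidal structure on $J_*$, whose binary component is the composite
\[
J_!J_*X * J_*Y \xrightarrow{\epsilon_X * J_*Y} X*(Y*J) \xrightarrow{(m^{X,Y}_J)^{-1}} (X \ox Y)*J
\]
and whose nullary component is $u_J^{-1} \colon J \to I*J$. By the general theory of mates in a 2-category, such a structure is automatically lax monoidal whenever its mate is opmonoidal, and $\eta$ and $\epsilon$ are then monoidal by construction, completing the verification that $J_! \dashv J_*$ is a monoidal adjunction. The main obstacle is therefore bookkeeping rather than genuine calculation: one must carefully identify within the sprawling diagrams of Theorem \ref{TheThm} the precise sub-diagrams that encode opmonoidality of $J_!$, and set up the correspondence between the $\A$-side axioms (\ref{LSkM1})--(\ref{LSkM5}) and their $\V$-side adjoint transposes.
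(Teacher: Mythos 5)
Your proposal is correct, but it runs the doctrinal-adjunction reduction in the opposite direction to the paper. The paper reduces, via Kelly's doctrinal adjunction, to showing that the \emph{right} adjoint $J_*$ is lax monoidal with $\phi_{X,Y} = \inv{m^{X,Y}_J}\circ(\epsilon_X * J_*Y)$ and $\iota = u_J^{-1}$, and then verifies the lax associativity and unit axioms by fresh diagram chases using (\ref{LAct1})--(\ref{LAct3}), naturality of $m$ and the triangle identities. You instead verify directly that $J_!$ is oplax monoidal with $\widehat{\phi}=\tilde{\gamma}$ and $\widehat{\iota}=\epsilon_I\circ J_!u_J^{-1}$, observing that the oplax axioms are exactly sub-diagrams already established inside the proof of Theorem (\ref{TheThm}): the square and the pentagon through $\varsigma_{J_!A\ox J_!B,C}$ in the proof of (\ref{LSkM1}) compose (using $\gamma_{A,B,C}=m^{J_!A,J_!B}_C\circ(\tilde{\gamma}_{A,B}*C)$) to give oplax associativity, and the displayed sub-diagrams in the proofs of (\ref{LSkM3}) and (\ref{LSkM5}) are precisely the two unit axioms; monoidality of the adjunction then follows by mates. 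This is exactly the structure the paper records in Remark (\ref{MonRem}) but does not exploit for the proof, so your route makes the proposition essentially free given the main theorem, at the cost of the bookkeeping you acknowledge, whereas the paper pays with an independent (and self-contained) computation on the $J_*$ side. Two minor points: (\ref{LSkM4}) is not itself one of the oplax unit axioms (it only enters the paper's verification of (\ref{LSkM5})); and your claim that the mate of $\tilde{\gamma}$ is literally $\inv{m^{X,Y}_J}\circ(\epsilon_X*J_*Y)$ needs a one-line check (naturality of $m$ plus a triangle identity), though nothing in your argument depends on that identification, since any mate lax structure on $J_*$ suffices for monoidality of $J_!\dashv J_*$.
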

	\begin{proof}
		Given an adjunction between monoidal categories, the left adjoint is oplax monoidal if and only if the right adjoint is lax monoidal. A similar statement holds for skew monoidal structures, as it is also an instance of a doctrinal adjunction \cite[Theorem 1.2]{Kell74}. Hence, it is enough
		to show that $J_*$ is a lax monoidal functor.
		
		We can define the following structure morphisms
		\begin{align*}
			\phi_{X,Y}&: J_*X \sk J_*Y \xrightarrow{\epsilon_X * J_*Y} X * J_*Y \xrightarrow{\inv{m^{X,Y}_J}} J_*(X \ox Y) \\
			\iota&: J \xrightarrow{u_J^{-1}} J_*I
		\end{align*}
		
		\textbf{Associativity.} We need to show that the following diagram commutes.
		\[\begin{tikzcd}
			{(J_*X \sk J_*Y) \sk J_*Z} && {J_*X \sk (J_*Y \sk J_*Z)} && {J_*X \sk J_*(Y \ox Z)} \\
			\\
			{J_*(X \ox Y) \sk J_*Z} && {J_*((X \ox Y) \ox Z)} && {J_*(X\ox(Y \ox Z))}
			\arrow["{\gamma_{J_*X,J_*Y,J_*Z}}", from=1-1, to=1-3]
			\arrow["{\phi_{X,Y} \sk J_*Z}"', from=1-1, to=3-1]
			\arrow["{J_*X \sk \phi_{Y,Z}}", from=1-3, to=1-5]
			\arrow["{\phi_{X,Y\ox Z}}", from=1-5, to=3-5]
			\arrow["{\phi_{X \ox Y, Z}}"', from=3-1, to=3-3]
			\arrow["{J_*a_{X,Y,Z}}"', from=3-3, to=3-5]
		\end{tikzcd}\]
		After expanding the diagram and filling it in, we get the following.
		\[\begin{tikzcd}
			{J_!(J_!J_*X*J_*Y)*J_*Z} && {J_!(X*J_*Y)*J_*Z} && {J_!J_*(X \ox Y)*J_*Z} \\
			\\
			{(J_!J_*X \ox J_!J_*Y)*J_*Z} && {(X \ox J_!J_*Y)*J_*Z} \\
			\\
			{J_!J_*X*(J_!J_*Y*J_*Z)} && {X*(J_!J_*Y*J_*Z)} && {(X \ox Y)*J_*Z} \\
			\\
			{J_!J_*X*(Y*J_*Z)} && {X*(Y*J_*Z)} && {J_*((X \ox Y) \ox Z)} \\
			\\
			{J_!J_*X*J_*(Y \ox Z)} && {X*J_*(Y \ox Z)} && {J_*(X \ox (Y \ox Z))}
			\arrow["{J_!(\epsilon_X*J_*Y)*J_*Z}", from=1-1, to=1-3]
			\arrow["{\tilde{\gamma}_{J_*X,J_*Y}*J_*Z}"', from=1-1, to=3-1]
			\arrow["{J_!\inv{m^{X,Y}_J}*J_*Z}", from=1-3, to=1-5]
			\arrow["{\varsigma_{X,J_*Y}*J_*Z}"', from=1-3, to=3-3]
			\arrow["{\epsilon_{X \ox Y}*J_*Z}", from=1-5, to=5-5]
			\arrow["{(\epsilon_X \ox J_!J_*Y)*J_*Z}"', from=3-1, to=3-3]
			\arrow["{m^{J_!J_*X,J_!J_*Y}_{J_*Z}}"', from=3-1, to=5-1]
			\arrow["{(X \ox \epsilon_Y)*J_*Z}", from=3-3, to=5-5]
			\arrow["{\epsilon_X*(J_!J_*Y*J_*Z)}"', from=5-1, to=5-3]
			\arrow["{J_!J_*X*(\epsilon_Y*J_*Z)}"', from=5-1, to=7-1]
			\arrow["{m^{X, J_!J_*Y}_{J_*Z}}"', from=3-3, to=5-3]
			\arrow["{X*(\epsilon_Y*J_*Z)}"', from=5-3, to=7-3]
			\arrow["{\inv{m^{X \ox Y,Z}_J}}", from=5-5, to=7-5]
			\arrow["{\epsilon_X*(Y*J_*Z)}"', from=7-1, to=7-3]
			\arrow["{J_!J_*X*\inv{m^{Y,Z}_J}}"', to=9-1, from=7-1]
			\arrow["{\inv{m^{X,Y}_{J_*Z}}}"', from=7-3, to=5-5]
			\arrow["{X*\inv{m^{Y,Z}_J}}"', from=7-3, to=9-3]
			\arrow["{J_*a_{X,Y,Z}}", from=7-5, to=9-5]
			\arrow["{\epsilon_X*J_*(Y \ox Z)}"', from=9-1, to=9-3]
			\arrow["{\inv{m^{X,Y \ox Z}_J}}"', from=9-3, to=9-5]
		\end{tikzcd}\]
		The pentagon is an instance of the axiom \ref{LAct1}. All remaining squares, except for the upper right one obviously commute because of naturality (in the upper left square, we identify $\tilde{\gamma}_{J_*X,J_*Y}$ with $\varsigma_{J_!J_*X, J_*Y}$).
		
		For the remaining part of the diagram, it is enough to show that the following commutes.
		\[\begin{tikzcd}
			{J_!(X*J_*Y)} & {X \ox J_!J_*Y} \\
			{J_!J_*(X\ox Y)} & {X \ox Y}
			\arrow["{\varsigma_{X,J_*Y}}", from=1-1, to=1-2]
			\arrow["{J_!\inv{m^{X,Y}_J}}"', from=1-1, to=2-1]
			\arrow["{X \ox \epsilon_Y}", from=1-2, to=2-2]
			\arrow["{\epsilon_{X \ox Y}}"', from=2-1, to=2-2]
		\end{tikzcd}\]
		After transposing it, we get the following
		\[\begin{tikzcd}
			{X*J_*Y} && {X*(J_!J_*Y*J)} && {(X \ox J_!J_*Y)*J} \\
			\\
			{J_*(X \ox Y)} &&&& {(X \ox Y)*J}
			\arrow["{X*\eta_{J_*Y}}", from=1-1, to=1-3]
			\arrow["{\inv{m^{X,Y}_J}}"', from=1-1, to=3-1]
			\arrow["{\inv{m^{X,J_!J_*Y}_J}}", from=1-3, to=1-5]
			\arrow["{(X \ox \epsilon_Y)*J}", from=1-5, to=3-5]
			\arrow[equal, from=3-1, to=3-5]
		\end{tikzcd}\]
		Now, we can apply the triangle identity of $J_! \dashv J_*$ to obtain the following diagram, which commutes because of  naturality.
		\[\begin{tikzcd}
			{X*J_*Y} && {X*(J_!J_*Y*J)} && {(X \ox J_!J_*Y)*J} \\
			\\
			&& {X*J_*Y} && {(X \ox Y)*J}
			\arrow["{X*\eta_{J_*Y}}", from=1-1, to=1-3]
			\arrow[equal, from=1-1, to=3-3]
			\arrow["{\inv{m^{X,J_!J_*Y}_J}}", from=1-3, to=1-5]
			\arrow["{X*(\epsilon_Y*J)}", from=1-3, to=3-3]
			\arrow["{(X \ox \epsilon_Y)*J}", from=1-5, to=3-5]
			\arrow["{\inv{m^{X,Y}_J}}"', from=3-3, to=3-5]
		\end{tikzcd}\]
		
		\textbf{Unitality.} For left unitality, we need
		\[\begin{tikzcd}
			{J \sk J_*X} && {J_*I \sk J_*X} \\
			{J_*X}       && {J_*(I \ox X)}
			\arrow["{\iota \sk J_*X}", from=1-1, to=1-3]
			\arrow["{\lambda_{J_*X}}"', from=1-1, to=2-1]
			\arrow["{\phi_{I,X}}", from=1-3, to=2-3]
			\arrow["{J_*\ell_X}", from=2-3, to=2-1]
		\end{tikzcd}\]
		This amounts to the following diagram, where the lower triangle is an instance of (\ref{LAct2}).
		\[\begin{tikzcd}
			{J \sk J_*X}   && {J_*I \sk J_*X} \\
			{J_!J_*I*J_*X}                    \\
			{I*J_*X}       && {I*J_*X}        \\
			{J_*X}         && {J_*(I \ox X)}
			\arrow["{J_!u_J^{-1}*J_*X}", from=1-1, to=1-3]
			\arrow["{J_!u_J^{-1}*J_*X}"', from=1-1, to=2-1]
			\arrow["{\epsilon_I*J_*X}"', from=2-1, to=3-1]
			\arrow["{\epsilon_I*J_*X}", from=1-3, to=3-3]
			\arrow["{u_{J_*X}}"', to=4-1, from=3-1]
			\arrow["{\ell_X*J}", from=4-3, to=4-1]
			\arrow["{\inv{m^{I,X}_J}}", from=3-3, to=4-3]
			\arrow[equal, from=3-1, to=3-3]
		\end{tikzcd}\]
		The right unitality condition states that the following commutes.
		\[\begin{tikzcd}
			{J_*X \sk J} && {J_*X \sk J_*I} \\
			{J_*X}       && {J_*(X \ox I)}
			\arrow["{J_*X \sk \iota}", from=1-1, to=1-3]
			\arrow["{\rho_{J_*X}}", from=2-1, to=1-1]
			\arrow["{\phi_{X,I}}", from=1-3, to=2-3]
			\arrow["{J_*r_X}"', from=2-1, to=2-3]
		\end{tikzcd}\]
		We obtain the following, where the triangle is an instance of (\ref{LAct3})
		\[\begin{tikzcd}
			{J_!J_*X * J} && {J_!J_*X * J_*I} \\
			{J_*X}       && {X*J_*I}        \\
			&& {J_*(X \ox I)}
			\arrow["{J_!J_*X * u_J^{-1}}", from=1-1, to=1-3]
			\arrow["{\rho_{J_*X}}", from=2-1, to=1-1]
			\arrow["{\epsilon_X*J_*I}", from=1-3, to=2-3]
			\arrow["{\inv{m^{X,I}_J}}", from=2-3, to=3-3]
			\arrow["{X*u_J^{-1}}"', from=2-1, to=2-3]
			\arrow["{J_*r_X}"', to=3-3, from=2-1]
		\end{tikzcd}\]
		Using the triangle identity of the adjunction $J_! \dashv J_*$, we get the following, which commutes because of naturality
		\[\begin{tikzcd}
			{J_*X} && {J_!J_*X * J} && {J_!J_*X * J_*I} \\
			\\
			&& {J_*X}        && {X*J_*I} 
			\arrow["{\epsilon_X*J}", from=1-3, to=3-3]       
			\arrow["{J_!J_*X * u_J^{-1}}", from=1-3, to=1-5]
			\arrow["{\eta_{J_*X}}", from=1-1, to=1-3]
			\arrow["{\epsilon_X*J_*I}", from=1-5, to=3-5]
			\arrow["{X*u_J^{-1}}"', from=3-3, to=3-5]
			\arrow[equal, from=1-1, to=3-3]
		\end{tikzcd}\]

	\end{proof}
	
	\begin{remark}
		Again, it can be observed that the proof above does not rely on the specific orientations of $a$, $\ell$ and $r$ and thus applies even when \mV is other then left skew monoidal.
	\end{remark}
	
	\begin{remark}\label{MonRem}
		The oplax monoidal structure on the left adjoint $J_!$ is the following
		\begin{align*}
			\widehat{\phi}_{A,B}&: J_!(A \sk B) = J_!(J_!A*B) \xrightarrow{\tilde{\gamma}_{A,B}} J_!A \ox J_!B \\
			\widehat{\iota}&: J_!J \xrightarrow{J_!u_J^{-1}} J_!(I*J) \xrightarrow{\epsilon_I} I
		\end{align*}
		Observe that the structure maps appear in the definitions of the associator and the left unitor in (\ref{TheThm}), i.e. $\gamma_{A,B,C} = \widehat{\phi}_{A,B}*C$ and $\lambda_A$ is $\widehat{\iota} * A$ postcomposed by $u_A$. Namely, we see that the question of when the associator and the left unitor are invertible is closely related to the question of when the left adjoint $J_!$ is strong monoidal. The precise relationship is captured in the following proposition. 
		
		\begin{proposition}
			Let $(\A,\sk,J)$ be a left skew monoidal category defined as in (\ref{TheThm}).
			\begin{enumerate}[(i)]
				\item If the left adjoint $J_!$ is strong monoidal, then the associator and the left unitor are invertible.
				\item If the fusion morphism $\tilde{\gamma}_{A,B}$ for any $A, B$ and $\epsilon_I$ are invertible, $J_!$ is strong monoidal.
			\end{enumerate}
		\end{proposition}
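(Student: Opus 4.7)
The plan is to read this off directly from the description of the oplax monoidal structure on $J_!$ given in Remark (\ref{MonRem}), together with the fact that the action bifunctor $*$ preserves isomorphisms and that the multiplicator $m$ and unitor $u$ of a strong action are invertible by definition.

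Recall from Remark (\ref{MonRem}) that the oplax monoidal structure on $J_!$ is $\widehat{\phi}_{A,B} = \tilde{\gamma}_{A,B}$ and $\widehat{\iota} = \epsilon_I \circ J_!u_J^{-1}$, and that the associator and left unitor of the induced skew monoidal structure factor as
\[ \gamma_{A,B,C} = m^{J_!A, J_!B}_C \circ (\tilde{\gamma}_{A,B} * C), \qquad \lambda_A = u_A \circ (\epsilon_I * A) \circ (J_!u_J^{-1} * A). \]
The key observation I will use throughout is that $m$, $u$ and their images under functors (and under $(-)*X$) are always invertible, since $(\A,*,m,u)$ is a strong action.

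For part (i), I suppose $J_!$ is strong monoidal, so $\widehat{\phi}_{A,B}=\tilde{\gamma}_{A,B}$ and $\widehat{\iota}$ are invertible. Then $\tilde{\gamma}_{A,B}*C$ is invertible (as $(-)*C$ is a functor) and $m^{J_!A,J_!B}_C$ is invertible, so $\gamma_{A,B,C}$ is a composite of isomorphisms. For $\lambda_A$, since $u_J$ is invertible so is $J_!u_J^{-1}$, and thus invertibility of $\widehat{\iota} = \epsilon_I \circ J_!u_J^{-1}$ forces $\epsilon_I$ itself to be invertible; the three factors in $\lambda_A$ are then isomorphisms.

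For part (ii), if each $\tilde{\gamma}_{A,B}$ is invertible then $\widehat{\phi}_{A,B}$ is invertible by definition, and $\widehat{\iota} = \epsilon_I \circ J_!u_J^{-1}$ is a composite of two isomorphisms (the second being invertible as in (i), the first by hypothesis), so $J_!$ is strong monoidal.

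There is no real obstacle here: the proposition is essentially a restatement of the factorisations in Remark (\ref{MonRem}), and the only subtlety is remembering to use that $J_!u_J^{-1}$ is always invertible in order to transfer invertibility between $\widehat{\iota}$ and $\epsilon_I$ in both directions.
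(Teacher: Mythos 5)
Your proof is correct and is essentially the argument the paper intends: the proposition is read off directly from the factorisations $\gamma_{A,B,C} = m^{J_!A,J_!B}_C \circ (\tilde{\gamma}_{A,B}*C)$ and $\lambda_A = u_A \circ (\widehat{\iota}*A)$ recorded in Remark (\ref{MonRem}), using invertibility of $m$, $u$ and $J_!u_J^{-1}$ to pass between invertibility of $\widehat{\iota}$ and of $\epsilon_I$. No gaps.
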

		
	\end{remark}

	\section{Examples}\label{Examples}
	
	\begin{example}
		Any monoidal category $(\V,\ox,I)$ can be regarded as a strong \mV-actegory. If we fix the unit object $I$, the action-induced functor will be $I_* = (-) \ox I \cong \id$. It has an obvious left adjoint $I_! = \id$ and hence we may apply Theorem (\ref{TheThm}). This way, we get a tensor $I_!A \ox B = A \ox B$ and recover the original monoidal structure $(\V,\ox,I)$.
	\end{example}
	
	\begin{example}\label{exFun}
		The category $[\D,\D]$ of endofunctors on a category \mD has a strict monoidal structure given by composition. This is no longer true if we consider a general functor category $[\C,\D]$. However, $[\D,\D]$ acts on $[\C,\D]$ by postcomposition:
		\begin{align*}
			[\D,\D] \x [\C,\D] &\rightarrow [\C,\D] \\
			(X,        F)    &\mapsto X \circ F
		\end{align*}
		
		Fix $J: \C \rightarrow \D$. If we assume \mC, \mD and $J$ are \uv{nice enough}\footnote{E. g. if \mC is small, \mD is cocomplete and $J$ is arbitrary \cite[Theorem 3.7.2]{Bor94}}, then	left Kan extensions of arbitrary functors $F: \C \rightarrow \D$ along $J$ exist, hence we have an adjunction
		\[\begin{tikzcd}
			{[\C,\D]} && {[\D,\D]}
			\arrow[""{name=0, anchor=center, inner sep=0}, "{\Lan_J}", bend left, from=1-1, to=1-3]
			\arrow[""{name=1, anchor=center, inner sep=0}, "{J_*}", bend left, from=1-3, to=1-1]
			\arrow["\dashv"{anchor=center, rotate=-90}, draw=none, from=0, to=1]
		\end{tikzcd}\]
		This gives rise to a left skew monoidal structure on $[\C,\D]$ with tensor $G \sk F = \Lan_JG \circ F$.
		
		This example appeared first in \cite{ACU10}, where the authors study relative monads.
		If left Kan extensions along a functor $J$ exist, and are furthermore pointwise, then $J$-relative monads can be characterized precisely as monoids in this skew monoidal structure.
		
		Proposition 2.3. of \cite{ACU15} shows that monads on \mD restrict to $J$-relative monads. Under certain conditions, one can also go the other way. The authors define the notion of \textit{well behavedness} conditions for $J$ [\textit{ibid.}, Definition 4.1.]. These conditions ensure that the constraint maps of the induced skew monoidal structure are invertible. In Theorem 4.6 it is shown that given a well behaved $J$, $J$-relative monads extend to ordinary monads on \mD. These results are a special instance of (\ref{Monoidality}) and the discussion in (\ref{MonRem}). The result in Proposition 2.3. follows from the fact that $J_*$ is lax monoidal. If $J$ is well behaved, $J_!$ is strong monoidal, so it sends monoids to monoids, which gives Theorem 4.6.
	\end{example}
	
	\begin{example}\label{exSkW}
		Let $T$ be an opmonoidal monad on a monoidal category $(\V,\ox,I)$.	Opmonoidal structures on a monad allow one to lift the monoidal structure of \mV to the category of $T$-algebras $\V^T$ \cite{Moer02}. This allows us to define a monoidal action of $\V^T$ on \mV given first by applying the forgetful functor $U^T$ in the first component and then using the tensor of \mV.	
		\begin{gather*}
			\V^T \times \V \xrightarrow{U^T \times \V} \V \times \V \rightarrow \V \\
			((A,a), B)     \mapsto                   (A,B)       \mapsto A \ox B
		\end{gather*}
		The unit $I$ of \mV gives rise to a functor
		\begin{align*}
			\V^T &\xrightarrow{I_*} \V \\
			(A,a) &\mapsto A \ox I \cong A
		\end{align*}
		In fact $I_* \cong U^T$, hence we have the adjunction
		\[\begin{tikzcd}
			\V && \V^T
			\arrow[""{name=0, anchor=center, inner sep=0}, "{F^T}", bend left, from=1-1, to=1-3]
			\arrow[""{name=1, anchor=center, inner sep=0}, "{I_* \cong U^T}", bend left, from=1-3, to=1-1]
			\arrow["\dashv"{anchor=center, rotate=-90}, draw=none, from=0, to=1]
		\end{tikzcd}\]
		Therefore, in accordance with Theorem (\ref{TheThm}), we obtain a left skew monoidal structure on \mV by defining  $A \sk B \defeq U^TF^TA \ox B$. Since $UF \cong T$, we may write
		\[
		A \sk B = TA \ox B.
		\]
		Note that similar results hold for the dual notion of monoidal comonads.
	\end{example}
	
	\begin{remark}
		The fact that an opmonoidal monad $T$ on a monoidal category $(\V,\ox,I)$ gives rise to a skew monoidal structure with tensor $TX\ox Y$ has been known and used for a long time. It is an instance of a construction of a skew monoidal structure from a \textit{skew warping} \cite{LS12}.
		
		On a left skew monoidal category $(\V, \ox, I)$, a skew warping consists of an endofunctor $T: \V \rightarrow \V$, an object $K \in \V$ and three (natural families of) morphisms:
		\begin{align*}
			v_{X,Y}&: T(TX \ox Y) \rightarrow TX \ox TY \\
			v_0&: TK \rightarrow I                      \\
			k_X&: X \rightarrow TX \ox K 
		\end{align*}
		satisfying five axioms.
		
		The data of a skew warping closely resemble the data, which are used to define the skew monoidal structure in Theorem (\ref{TheThm}). In fact, the theorem always gives rise to a slightly more general notion of a skew warping, which appeared in \cite{LS15}. For a (skew) left action $*: \V \x \A \rightarrow \A$, a \textit{left skew warping riding the action} consists of a functor $T:\A \rightarrow \V$, an object $K \in \A$ and three (natural families of) morphisms 
		\begin{align*}
			v_{A,B}&: T(TA * B) \rightarrow TA \ox TB \\
			v_0&: TK \rightarrow I                      \\
			k_A&: A \rightarrow TA * K 
		\end{align*}
		satisfying five axioms. This notion is in fact a special instance of a skew warping on a skew bicategory in the sense of \cite{LS14}. Under the notation of Theorem (\ref{TheThm}), we get a skew warping riding the given action if we set $T = J_!$, $K = J$, $v_{A,B} = \tilde{\gamma}_{A,B}$, $v_0 = \epsilon_I \circ Ju_J^{-1}$ and $k_A = \eta_A$. The five axioms of a skew warping riding an action will then essentially coincide with the five axioms of a left skew monoidal category.
		
		Conversely, any skew warping riding an action gives rise to a skew monoidal structure on the actegory via a construction analogous to the one of Theorem (\ref{TheThm}) (see \cite{LS15}). This means another way to approach the proof of the statement in (\ref{TheThm}) is to show that the appropriate data constitute a skew warping riding the action. However, to check that the five axioms for skew warping riding an action in the sense of \cite{LS15} are satisfied is essentially the same as showing that the five axioms for a skew monoidal category hold true, so this approach would not offer any simplification.
	\end{remark}
	
	\begin{example}\label{monoid}
		An elementary example can be given on \mSet, if we consider a monoid $(M, +, 0)$. Because $M$ is both a monoid and a comonoid object in \mSet, the functor $M \x (-)$ has a structure of an opmonoidal monad. Hence, we may equip \mSet with a left skew monoidal structure with tensor $A \sw{M} B = M \x A \x B$ and unit $1$. Constraint maps are given element-wise as
		\begin{align*}
			M \x (M \x A \x B) \x C &\xrightarrow{\gamma_{A,B,C}} M \x A \x (M \x B \x C) \\
			(n, (m, a, b), c)       &\mapsto                      (m+n, a, (n, b, c)) \\
			M \times 1 \times B     &\xrightarrow{\lambda_B} B \\
			(m, *, b)               &\mapsto b \\
			A                       &\xrightarrow{\rho_A} M \times A \times 1\\
			a                       &\mapsto (0, a, *)
		\end{align*}
		
		In this example, it is easy to characterize when exactly the associator is invertible. This is precisely when $(m,n) \mapsto (m+n, m)$ is a bijection. If $M$ is a group, it is the case. If $(m,n) \mapsto (m+n, m)$ is invertible, then for any $m \in M$, there has to exist a pair $(m, n)$ such that $(m, n) \mapsto (0, m)$. This means that $m+n = 0$ and hence $(m, n + m) \mapsto (m + n + m, m) = (m, m)$. At the same time $(m, 0) \mapsto (m, m)$. Hence $n + m = 0$. Therefore $\alpha$ is invertible if and only if $M$ is a group. It is easy to see that the unit constraints are not invertible unless $M$ is trivial.
	\end{example}
	
	\begin{example} This example closely follows \cite{BL20}.
		Let $(\V, \ox, I)$ be a symmetric monoidal category and $B$ a bialgebra in \mV. Similarly to the previous example\footnote{Note that the previous example is an instance of the bialgebra one.}, we may define a new tensor on \mV as $X \sw{B} Y = X \ox B \ox Y$. Because of the algebra structure $(B,\mu,\eta)$, the functor $(-) \ox B$ is a monad. The coalgebra structure $(B,\delta,\epsilon)$ ensures it is opmonoidal. From the Theorem (\ref{TheThm}) we see that $(\V,\sw{B},B)$ is a left skew monoidal structure.
		
		To understand what the constraint morphisms do in this example, it is convenient to use string diagrams. Below, the associator and the unitors are shown.
		
		\centerline{
			\begin{tikzpicture}[scale=1]  
				\path (4,0) node {}
				(0,3) node[name=Xu] {$X$}
				(0.6,3) node[name=B1u] {$B$}
				(1.2,3) node[name=Yu] {$Y$}
				(1.8,3) node[name=B2u] {$B$}
				(2.4,3) node[name=Zu] {$Z$}
				(0,0) node[name=Xd] {$X$}
				(0.6,0) node[name=B1d] {$B$}
				(1.2,0) node[name=Yd] {$Y$}
				(1.8,0) node[name=B2d] {$B$}
				(2.4,0) node[name=Zd] {$Z$}
				(0.6,0.8) node[arr, name=m] {$\mu$}
				(1.8,2.2) node[arr, name=d] {$\delta$};
				\draw[braid] (Xu) to (Xd); 
				\draw[braid, name path=Y] (Yu) to (Yd); 
				\draw[braid] (Zu) to (Zd); 
				\draw[braid] (m) to (B1d);
				\draw[braid] (B2u) to (d);
				\draw[braid] (B1u) to [out=270, in=135] (m);
				\draw[braid] (d) to [out=315, in=90] (B2d);
				\path[braid, name path=dm] (d) to [out=225, in=45] (m); 
				\fill[white,name intersections={of=Y and dm}] (intersection-1) circle(0.1); 
				\draw[braid] (d) to [out=225, in=45] (m); 
			\end{tikzpicture}
			\begin{tikzpicture}
				\path (2,0) node {}
				(0.6,3) node[name=Xu] {$X$}
				(0,1.5) node[arr, name=epsilon]  {$\epsilon$}
				(0.6,0) node[name=Xd] {$X$}
				(0,3) node[name=Bu] {$B$} ;
				\draw[braid] (Xu) to (Xd);
				\draw[braid] (epsilon) to (Bu);
			\end{tikzpicture}
			\begin{tikzpicture}
				\path (2,0) node {}
				(0,3) node[name=Xu] {$X$}
				(0.6,1.5) node[arr, name=eta]  {$\eta$}
				(0,0) node[name=Xd] {$X$}
				(0.6,0) node[name=Bd] {$B$} ;
				\draw[braid] (Xu) to (Xd);
				\draw[braid] (eta) to (Bd);
			\end{tikzpicture}
		}
		
		In this example, the associator is invertible if and only if the bialgebra $B$ is Hopf. The unit constraints are invertible only when $B$ is trivial.
		
	\end{example}
	
	\begin{example}
		Let \mV be a monoidal category, which has all copowers of the unit $X \cdot I \defeq \sum_{x \in X}I$ (for $X \in \Set$) and such that the tensor preserves these copowers in the second variable. We have a monoidal adjunction
		\[\begin{tikzcd}
			\Set && \V
			\arrow[""{name=0, anchor=center, inner sep=0}, "{(-)\cdot I}", bend left, from=1-1, to=1-3]
			\arrow[""{name=1, anchor=center, inner sep=0}, "{\V(I, -)}", bend left, from=1-3, to=1-1]
			\arrow["\dashv"{anchor=center, rotate=-90}, draw=none, from=0, to=1]
		\end{tikzcd}\]
		This adjunction then induces a monoidal comonad $K$ on \mV.	The warped monoidal structure $A \sw{K} B \defeq A \ox KB$ is characterized by the fact that
		\begin{prooftree}
			\AxiomC{$A \sw{K} B \rightarrow C$}			
			\UnaryInfC{$\V(I,B) \rightarrow \V(A, C)$}
		\end{prooftree}
	\end{example}
	
	\begin{example}
		Consider $\Vect_\mathbb{K} = (\Vect_\mathbb{K}, \ox, \mathbb{K})$. We have the free-forgetful adjunction
		\[\begin{tikzcd}
			\Set && \Vect_\mathbb{K}
			\arrow[""{name=0, anchor=center, inner sep=0}, "{F = (-)\cdot\mathbb{K}}", bend left, from=1-1, to=1-3]
			\arrow[""{name=1, anchor=center, inner sep=0}, "{U = \Vect_\mathbb{K}(\mathbb{K}, -)}", bend left, from=1-3, to=1-1]
			\arrow["\dashv"{anchor=center, rotate=-90}, draw=none, from=0, to=1]
		\end{tikzcd}\]
		From this, we get a monoidal comonad $FU = \Vect_\mathbb{K}(\mathbb{K}, -)\cdot\mathbb{K}$ and we can define a skew monoidal tensor $A\sw{\mathbb{K}}B \defeq A \ox \Vect_\mathbb{K}(\mathbb{K}, B)\cdot\mathbb{K} = A \ox FUB$.
		
		Because we have $\Vect_\mathbb{K}(A \sw{\mathbb{K}} B, C) = \Vect_\mathbb{K}(A \ox UB \cdot \mathbb{K}, C) \cong \mathsf{Lin}_2(A, UB \cdot \mathbb{K}; C)$, we see that such maps correspond to functions linear in the first variable. 
	\end{example}
	
	\begin{example}
		Let us consider a category \mC with all small coproducts. There is a (left) monoidal action of $(\Set, \times, 1)$ on \mC given by the \mSet-copower \cite[Example 3.2.7.]{CG22}
		\begin{align*}
			\ox: \Set \times \C &\rightarrow \C \\
			\langle X, c \rangle &\mapsto X \ox c \defeq \coprod_{x \in X}c
		\end{align*}
		$X \ox c$ has the universal property
		\begin{equation*}\label{copower}\tag{$\star$}
			\C(X \ox c,d) \cong \Set(X, \C(c,d))
		\end{equation*}
		for any $X \in \Set, c \in \C$. Namely, fixing arbitrary $j \in \C$, $j_\ox = (-)\ox j$ has a right adjoint $j_\# \defeq \C(j, -)$, so applying a variation of Theorem \ref{TheThm} yields a right skew monoidal structure on \mC with tensor
		\[c \sk_\ox d \defeq j_\# c*d = \coprod_{f: j \rightarrow c}d \]

		We could also use the universal property (\ref{copower}) as a defining property of $X \ox c$. In this case we can generalize \mSet to an arbitrary closed monoidal category \mV and talk about \mV-copowered categories.
	\end{example}
	
	\begin{example}
		If we dually consider \mC to be a category with all small products, we can define a (left) monoidal action of $\Set^\mathsf{op}$ on \mC as follows \cite[Example 3.2.8.]{CG22}.
		\begin{align*}
			\pitchfork: \Set^\mathsf{op} \times \C &\rightarrow \C \\
			\langle X, c \rangle &\mapsto X \pitchfork c \defeq \prod_{x \in X}c
		\end{align*}
		$X \pitchfork c$ has the universal property
		\begin{equation*}\label{power}\tag{$\dagger$}
			\Set^\mathsf{op}(\C(d,c),X) \cong \C(d, X \pitchfork c)
		\end{equation*}
		Hence, any $j_\pitchfork = (-)\pitchfork j$ has a left adjoint $j_! \defeq \C(-,j)$, so we can define a left skew monoidal structure with tensor
		\[c \sk_\pitchfork d \defeq  j_!c \pitchfork d = \prod_{g: c \rightarrow j}d\]
	\end{example}
	
	\begin{example}\cite[Example 3.2.9.]{CG22}
		Suppose now that \mM and \mC are small categories. If \mM is a monoidal category, then we can define a monoidal structure on $[\M,\Set]$ by Day convolution.
		Then, we can also extend any monoidal action
		\begin{align*}
			\M \times \C &\rightarrow \C \\
			(m, c)       &\mapsto m \bullet c
		\end{align*}
		to a monoidal action of $[\M,\Set]$ on $[\C,\Set]$ called the \textit{Day convolaction} given as
		\begin{equation*}
			M * C \defeq \int^{m\in\M,c\in\C}\C(m \bullet c, -) \times Mm \times Cc
		\end{equation*}
		with unit $\M(j,-)$.
		
		For $J \in [\C,\Set]$ we have
		\[J_*M = \int^{m\in\M,c\in\C}\C(m \bullet c, -) \times Mm \times Jc\]
		
		If we denote by $\ox_d$ the Day convolution tensor product on $[\M,\Set]$, then the functor $(-)\ox_d  G$ for a fixed $G\in[\M,\Set]$ has a right adjoint \cite[Remark 6.2.4.]{Lor23} of form
		\[F \mapsto \int_{m\in\M}\big[Gm, F(m \ox -)\big] \]
		
		We can adjust this result to our action setting. For a fixed $J \in [\C,\Set]$, the functor $J_* \defeq (-)*J$ has a right adjoint\footnote{Checking that this functor is indeed a right adjoint amounts to adjusting the proof of closedness of the convolution tensor, which can be found for instance in \cite{Lor23}.} $J_\#$ defined as
		\[J_\#C \defeq \int_{c}[Jc, C(- \bullet c)].\]
		
		Therefore, we can apply (\ref{TheThm}) to equip $[\C, \Set]$ with a skew monoidal tensor
		\[
		C \sk D \defeq J_\#C*D = \int^{m\in\M,c\in\C} \C(m \bullet c, -) \x Dc \x \int_d\Set(Jd,C(m \bullet c))
		\]
		
	\end{example}

	\section{Braidings on induced structures}\label{Braiding}
	
	The usual notion of a \textit{braiding} on a monoidal category $(\V, \ox, I)$ involves isomorphisms indexed by pairs of objects: $c_{X,Y}: X \ox Y \rightarrow Y \ox X$. Generalizing the notion to the setting of skew monoidal categories is not entirely straightforward. In \cite{BL20}, Bourke and Lack suggest that on a skew monoidal category, braidings should involve isomorphisms indexed by triples of objects, where one object acts as a \uv{pivot}: $s^P_{A,B}: (P \sk A) \sk B \rightarrow (P \sk B) \sk A$. In the case of a monoidal category, this notion of a braiding coincides with the usual one [\textit{ibid}., prop. 2.6].
	
	However, there is a priori no reason why one should consider only braidings where the pivot is on the left side. One could also define a braiding on a skew monoidal category as $s^P_{A,B}: A \sk (B \sk P) \rightarrow B \sk (A \sk P)$. Note that if we take a left skew monoidal category $(\A,\sk,I)$ braided in the sense of \cite{BL20}, we will have a left skew monoidal category $(\A^\mathsf{op}, \sk^\mathsf{rev}, I)$ with this alternative notion of a braiding. Both of these variants can also be considered on right skew monoidal categories, which can be easily justified by looking at these structures in an opposite category.
	
	Therefore, we will introduce two variants of the definition of a braiding -- a \textit{right braiding} (which corresponds to the braiding defined in \cite{BL20}) and a \textit{left braiding}.

	\begin{definition}
		A \textit{right braiding} on a left skew monoidal category $(\A, \sk, J, \gamma, \lambda, \rho)$ consists of a natural isomorphism
		\begin{equation*}
			s^P_{A,B}: (P \sk A) \sk B \rightarrow (P \sk B) \sk A
		\end{equation*}
		satisfying the following four axioms:
		\[\begin{tikzcd}\label{SkBr1}\tag{RSkBr1}
			{((P \sk A) \sk B) \sk C} && {((P \sk A) \sk C) \sk B} && {((P \sk C) \sk A) \sk B} \\
			\\
			{((P \sk B) \sk A) \sk C} && {((P \sk B) \sk C) \sk A} && {((P \sk C) \sk B) \sk A}
			\arrow["{s^{P \sk A}_{B,C}}", from=1-1, to=1-3]
			\arrow["{s^{P}_{A,C} \sk B}", from=1-3, to=1-5]
			\arrow["{s^{P \sk C}_{A,B}}", from=1-5, to=3-5]
			\arrow["{s^{P}_{A,B} \sk C}"', from=1-1, to=3-1]
			\arrow["{s^{P \sk B}_{A,C}}"', from=3-1, to=3-3]
			\arrow["{s^{P}_{B,C} \sk A}"', from=3-3, to=3-5]
		\end{tikzcd}\]
		\[\begin{tikzcd} \label{SkBr2} \tag{RSkBr2}
			             & {((P \sk B) \sk A) \sk C}               \\
			{((P \sk A) \sk B) \sk C} && {((P \sk B) \sk C) \sk A} \\
			\\
			{(P \sk A) \sk (B \sk C)} && {(P \sk (B \sk C)) \sk A}
			\arrow["{s^{P \sk B}_{A,C}}", from=1-2, to=2-3]
			\arrow["{s^P_{A,B} \sk C}", from=2-1, to=1-2]
			\arrow["{\gamma_{P \sk A,B,C}}"', from=2-1, to=4-1]
			\arrow["{\gamma_{P,B,C} \sk A}", from=2-3, to=4-3]
			\arrow["{s^{P}_{A, B \sk C}}"', from=4-1, to=4-3]
		\end{tikzcd}\]
		\[\begin{tikzcd} \label{SkBr3} \tag{RSkBr3}
			             & {((P \sk A) \sk C) \sk B}               \\
			{((P \sk A) \sk B) \sk C} && {((P \sk C) \sk A) \sk B} \\
			\\
			{(P \sk (A \sk B)) \sk C} && {(P \sk C) \sk (A \sk B)}
			\arrow["{s^{P}_{A,C} \sk B}", from=1-2, to=2-3]
			\arrow["{s^{P \sk A}_{B,C}}", from=2-1, to=1-2]
			\arrow["{\gamma_{P,A,B} \sk C}"', from=2-1, to=4-1]
			\arrow["{\gamma_{P \sk C,A,B}}", from=2-3, to=4-3]
			\arrow["{s^{P}_{A \sk B, C}}"', from=4-1, to=4-3]
		\end{tikzcd}\]
		\[\begin{tikzcd}\label{SkBr4}\tag{RSkBr4}
			{((P \sk A) \sk B) \sk C} && {(P \sk (A \sk B)) \sk C} && {P \sk ((A \sk B) \sk C)} \\
			\\
			{((P \sk A) \sk C) \sk B} && {(P \sk (A \sk C)) \sk B} && {P \sk ((A \sk C) \sk B)}
			\arrow["{\gamma_{P,A,B} \sk C}", from=1-1, to=1-3]
			\arrow["{\gamma_{P,A \sk B, C}}", from=1-3, to=1-5]
			\arrow["{P \sk s^{A}_{B,C}}", from=1-5, to=3-5]
			\arrow["{s^{P \sk A}_{B,C}}"', from=1-1, to=3-1]
			\arrow["{\gamma_{P,A,C} \sk B}"', from=3-1, to=3-3]
			\arrow["{\gamma_{P,A \sk C, B}}"', from=3-3, to=3-5]
		\end{tikzcd}\]
		
		We say that $s$ is a \textit{right symmetry} if
		\[\begin{tikzcd}
			{(P \sk A) \sk B} && {(P \sk A) \sk B} \\
			& {(P \sk B) \sk A}
			\arrow[equal, from=1-1, to=1-3]
			\arrow["{s^P_{A,B}}"', from=1-1, to=2-2]
			\arrow["{s^P_{B,A}}"', from=2-2, to=1-3]
		\end{tikzcd}\]
		
		Analogously, we define a \textit{left braiding} on a left skew monoidal category as a family of natural isomorphisms
		\begin{equation*}
			s^P_{A,B}: A \sk (B \sk P) \rightarrow B \sk (A \sk P)
		\end{equation*}
		satisfying four axioms corresponding to (\ref{SkBr1}) -- (\ref{SkBr4}). Similarly, we say that such $s$ is a \textit{left symmetry}, if it is self-inverse in the sense that $s^P_{B,A} \circ s^P_{A,B} = \id_{A \sk (B \sk P)}$.
		
	\end{definition}

	We can show that for skew monoidal structures arising from actions as in (\ref{TheThm}), usual braidings on the acting monoidal category give rise to braidings on the induced skew monoidal structure. In the following, we will consider a structure arising on a right actegory from a right adjoint, as this setting corresponds to the setting of \cite{BL20}.
	
	\begin{proposition}
		Let $(\V, \ox, I, a, \ell, r, c)$ be a braided monoidal category and \mA a strong right \mV-actegory. Suppose that $J^*$ has a right adjoint $J^\#$ and let $(\A, \sk, J, \gamma, \lambda, \rho)$ be the left skew monoidal category constructed from a right action and a right adjoint as in (\ref{VarThm}). Then the morphism family defined as the following composite
		\[\begin{tikzcd}
			{(P \sk A) \sk B = (P*J^\#A)*J^\#B} && {P * (J^\#A \ox J^\#B)} \\
			{(P \sk B) \sk A = (P*J^\#B)*J^\#A} && {P * (J^\#B \ox J^\#A)}
			\arrow["{m^{J^\#A, J^\#B}_P}", from=1-1, to=1-3]
			\arrow["{P*c_{J^\#A,J^\#B}}", from=1-3, to=2-3]
			\arrow["{\inv{m^{J^\#B, J^\#A}_P}}", from=2-3, to=2-1]
			\arrow["{s^P_{A,B}}"', from=1-1, to=2-1]
		\end{tikzcd}\]
		is a right braiding on \mA.
		
		Furthermore, if $c$ is a symmetry, then $s$ is a right symmetry.
	\end{proposition}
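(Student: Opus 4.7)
The plan is to reduce each of the four braiding axioms for $s$ to a corresponding statement about the braiding $c$ on $\V$. The starting observation is that, by construction, $s^P_{A,B}$ is the conjugate of the map $P * c_{J^\#A, J^\#B}: P * (J^\#A \ox J^\#B) \to P * (J^\#B \ox J^\#A)$ by the multiplicator isomorphism $m^{J^\#A, J^\#B}_P$. In particular $s^P_{A,B}$ is automatically a natural isomorphism (composite of naturals), and when $c^2 = \id$ one has
\[ s^P_{B,A} \circ s^P_{A,B} = \inv{m^{J^\#A,J^\#B}_P} \circ \big(P * (c_{J^\#B, J^\#A} \circ c_{J^\#A, J^\#B})\big) \circ m^{J^\#A,J^\#B}_P = \id_{(P \sk A) \sk B}, \]
which will prove the symmetry claim once the four axioms have been established.

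For axiom (\ref{SkBr1}) no associator $\gamma$ appears. After expanding every instance of $s$ and using naturality of $m$ to pull all multiplicators to the outside, each path around the square becomes $P$ acting on one of the two sides of the braid relation for $c$ applied to the triple $(J^\#A, J^\#B, J^\#C)$. Since the braid (Yang--Baxter) equation is a standard consequence of the two hexagon axioms for $c$ in $\V$, the square commutes.

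For (\ref{SkBr2}) and (\ref{SkBr3}) the associator $\gamma$ is present. Expanding $\gamma$ via its definition as a composite of a multiplicator and the fusion map $\tilde\gamma$, and expanding every $s$ as above, every vertex of the diagram can be rewritten in the form $P * U$ for some parenthesised tensor $U$ built out of $J^\#A, J^\#B, J^\#C$ (possibly with a fusion map in one slot). Repeated use of naturality of $m$ together with the action pentagon (\ref{LAct1}) brings all such vertices into a common normal form, for instance $P * ((J^\#A \ox J^\#B) \ox J^\#C)$, after which each axiom reduces, inside $P * (-)$, to one of the two hexagon identities for $c$ in $\V$. For (\ref{SkBr4}) the same strategy applies, but the braiding $P \sk s^A_{B,C}$ on the top path sits inside a $P \sk (-)$; the additional input required here is naturality of $c$ with respect to the fusion map that appears in $\gamma$.

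The main obstacle is purely combinatorial: the fusion map $\tilde\gamma$ buried inside $\gamma$ must be carried through several multiplicators, and each large diagram needs a sequence of (\ref{LAct1}) instances and naturality squares for $m$ before all vertices can be placed in the standard shape $P * (J^\#A \ox J^\#B \ox J^\#C)$. Once this normal-form reduction is effected, every axiom collapses to a property of $c$ (naturality, a hexagon, or Yang--Baxter) already available from the braided structure on $\V$, and no new data beyond $c$ and the action axioms are needed.
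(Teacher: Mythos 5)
Your overall strategy---expand every $s$ and every $\gamma$, normalize all vertices to the form $P*U$ with $U$ a tensor word in $J^\#A, J^\#B, J^\#C$ by naturality of $m$ together with (\ref{LAct1}), and then invoke naturality and the hexagon identities for $c$---is essentially the paper's proof of (\ref{SkBr1})--(\ref{SkBr3}), and your symmetry computation is the same as the paper's. (A small imprecision: already for (\ref{SkBr1}) you need (\ref{LAct1}) and not just naturality of $m$, since the multiplicators occurring in $s^{P\sk A}_{B,C}$ are taken over the base $P*J^\#A$ rather than over $P$; but you concede this in your closing paragraph, so I do not count it as a gap.)

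The genuine gap is in your treatment of (\ref{SkBr4}). There the map $P\sk s^A_{B,C}$ contributes $P*J^\#\bigl(\inv{m^{J^\#C,J^\#B}_A}\circ(A*c_{J^\#B,J^\#C})\circ m^{J^\#B,J^\#C}_A\bigr)$, i.e.\ action multiplicators sitting \emph{inside} $J^\#$, while the two consecutive associators on each path contribute the iterated fusion maps $\tilde\gamma_{A*J^\#B,C}\circ(\tilde\gamma_{A,B}\ox J^\#C)$ (respectively with $B,C$ interchanged). Naturality of the strength/fusion map applied to $c_{J^\#B,J^\#C}$ only yields the square relating $P*(J^\#A\ox c_{J^\#B,J^\#C})$ to $P*J^\#(A*c_{J^\#B,J^\#C})$; it does not connect the iterated fusion maps with $J^\#m^{J^\#B,J^\#C}_A$. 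What is actually needed is the pentagon
\[ J^\#\bigl(m^{J^\#B,J^\#C}_A\bigr)\circ\tilde\gamma_{A*J^\#B,\,C}\circ\bigl(\tilde\gamma_{A,B}\ox J^\#C\bigr)\;=\;\tilde\gamma_{A,\,J^\#B\ox J^\#C}\circ a_{J^\#A,J^\#B,J^\#C}, \]
where $\tilde\gamma_{A,X}$ for $X\in\V$ denotes the strength. This identity is not a formal consequence of naturality, of (\ref{LAct1}), or of the braiding axioms for $c$: in the paper it is established by transposing across the adjunction $J^*\dashv J^\#$ and using (\ref{LAct1}) together with a triangle identity---the same kind of argument as the pentagon step in the proof of (\ref{TheThm}). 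So your concluding claim that no input beyond $c$ and the action axioms is required fails for (\ref{SkBr4}); you must either reprove this fusion-map compatibility via the adjunction or explicitly cite the corresponding step of the main theorem.
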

	\begin{proof}
		The first axiom looks as
		\[\begin{tikzcdscale}
			{((P*J^\#A)*J^\#B)*J^\#C} && {((P*J^\#A)*J^\#C)*J^\#B} && {((P*J^\#C)*J^\#A)*J^\#B} \\
			\\
			{((P*J^\#B)*J^\#A)*J^\#C} && {((P*J^\#B)*J^\#C)*J^\#A} && {((P*J^\#C)*J^\#B)*J^\#A}
			\arrow["{s^{P \sk A}_{B,C}}", from=1-1, to=1-3]
			\arrow["{s^{P}_{A,C} \sk B}", from=1-3, to=1-5]
			\arrow["{s^{P \sk C}_{A,B}}", from=1-5, to=3-5]
			\arrow["{s^{P}_{A,B} \sk C}"', from=1-1, to=3-1]
			\arrow["{s^{P \sk B}_{A,C}}"', from=3-1, to=3-3]
			\arrow["{s^{P}_{B,C} \sk A}"', from=3-3, to=3-5]
		\end{tikzcdscale}\]
		After expanding using definitions of involved morphisms, we get the diagram \ref{pfSkBr1}. Here, all inner pentagons are instances of (\ref{LAct1}), all inner squares commute because of naturality of the involved morphisms and the hexagons are instances of one of the hexagon identities for $c$.
		
		(\ref{SkBr2}) translates to
		\[\begin{tikzcd}
			             & {((P*J^\#B)*J^\#A)*J^\#C}               \\
			{((P*J^\#A)*J^\#B)*J^\#C} && {((P*J^\#B)*J^\#C)*J^\#A} \\
			\\
			{(P*J^\#A)*J^\#(B*J^\#C)} && {(P*J^\#(B*J^\#C))*J^\#A}
			\arrow["{s^{P \sk B}_{A,C}}", from=1-2, to=2-3]
			\arrow["{s^P_{A,B} \sk C}", from=2-1, to=1-2]
			\arrow["{\gamma_{P \sk A,B,C}}"', from=2-1, to=4-1]
			\arrow["{\gamma_{P,B,C} \sk A}", from=2-3, to=4-3]
			\arrow["{s^{P}_{A,B \sk C}}"', from=4-1, to=4-3]
		\end{tikzcd}\]
		which can be expanded and filled in as (\ref{pfSkBr2}). Here, the pentagons are again (\ref{LAct1}), the hexagon is an instance of an axiom for $c$ and the squares commute because of naturality.
		
		The third axiom (\ref{SkBr3}) now follows from what has been shown already. Since $c$ is a braiding on \mV, we also have the inverse braiding $\tilde{c}_{X,Y} = c_{Y,X}^{-1}$. If we replace $c$ with $\tilde{c}$ in the definition in the statement of the proposition, we obtain a morphism family $\tilde{s}$ for which (\ref{SkBr2}) holds as well. Observe that (\ref{SkBr2}) for $\tilde{s}$ is equivalent to (\ref{SkBr3}) for $s$.
		
		The fourth axiom (\ref{SkBr4}) is
		\[\begin{tikzcdscale}
			{((P*J^\#A)*J^\#B)*J^\#C} && {(P*J^\#(A*J^\#B))*J^\#C} && {P*J^\#((A*J^\#B)*J^\#C)} \\
			\\
			{((P*J^\#A)*J^\#C)*J^\#B} && {(P*J^\#(A*J^\#C))*J^\#B} && {P*J^\#((A*J^\#C)*J^\#B)}
			\arrow["{\gamma_{P,A,B} \sk C}", from=1-1, to=1-3]
			\arrow["{\gamma_{P,A \sk B, C}}", from=1-3, to=1-5]
			\arrow["{P \sk s^{A}_{B,C}}", from=1-5, to=3-5]
			\arrow["{s^{P \sk A}_{B,C}}"', from=1-1, to=3-1]
			\arrow["{\gamma_{P,A,C} \sk B}"', from=3-1, to=3-3]
			\arrow["{\gamma_{P,A \sk C, B}}"', from=3-3, to=3-5]
		\end{tikzcdscale}
		\]
		which can be written out as (\ref{pfSkBr4}). Here, the squares commute because of naturality and the two pentagons on the left are instances of (\ref{LAct1}). It remains to deal with the two pentagons on the right. It suffices to show that the following commutes.
		\[\begin{tikzcd}
			{J^\#(A * J^\#B) \ox J^\#C}    && {J^\#((A * J^\#B) * J^\#C)}   \\
			{(J^\#A \ox J^\#B) \ox J^\#C}                                   \\
			{J^\#A \ox (J^\#B \ox J^\#C)}  && {J^\#(A * (J^\#B \ox J^\#C))}
			\arrow["{a_{J^\#A, J^\#B, J^\#C}}"', from=2-1, to=3-1]
			\arrow["{\tilde{\gamma}_{A,B} \ox J^\#C}", from=2-1, to=1-1]
			\arrow["{\tilde{\gamma}_{A*J^\#B, C}}", from=1-1, to=1-3]
			\arrow["{\varsigma_{A,J^\#B \ox J^\#C}}"', from=3-1, to=3-3]
			\arrow["{J^\#m^{J^\#B,J^\#C}_A}", from=1-3, to=3-3]
		\end{tikzcd}\]
		where $\varsigma_{A,Y}$ is the right action and right adjoint version of the strength introduced in (\ref{strength}) (namely $\tilde{\gamma}_{A,B} = \varsigma_{A, J^\# B} $).
		Observe that this diagram is the right action and right adjoint version of the pentagon which appears on page \pageref{PentagonWhichWillBeReferencedLater} and whose commutativity is shown as a part of the proof of theorem (\ref{TheThm}).

		To complete the proof of the statement, it remains to show that if $c$ is a symmetry, the for every $P,A,B \in \A$, $s^P_{B,A} \circ s^P_{A,B} = \id$. This immediately follows, as shown by the diagram below.
		\[\begin{tikzcd}
			{(P*J^\#A)*J^\#B} && {P * (J^\#A \ox J^\#B)} \\
			&& {P * (J^\#B \ox J^\#A)} \\
			{(P*J^\#B)*J^\#A}                            \\
			&& {P * (J^\#B \ox J^\#A)} \\
			{(P*J^\#A)*J^\#B} && {P * (J^\#A \ox J^\#B)}
			\arrow["{m^{J^\#A, J^\#B}_P}", from=1-1, to=1-3]
			\arrow["{P*c_{J^\#A,J^\#B}}", from=1-3, to=2-3]
			\arrow["{P*c_{J^\#B,J^\#A}}", from=4-3, to=5-3]
			\arrow["{\inv{m^{J^\#B, J^\#A}_P}}"', from=2-3, to=3-1]
			\arrow["{m^{J^\#B, J^\#A}_P}"', from=3-1, to=4-3]
			\arrow[equal, from=2-3, to=4-3]
			\arrow["{s^P_{A,B}}"', from=1-1, to=3-1]
			\arrow["{s^P_{B,A}}"', from=3-1, to=5-1]
			\arrow["{\inv{m^{J^\#A, J^\#B}_P}}", from=5-3, to=5-1]
			\arrow["{\id}", bend left=45, shift left=15, from=1-3, to=5-3]
		\end{tikzcd}\]
		
	\end{proof}	
	
	\newgeometry{top=5mm, bottom=5mm}
	
	\begin{landscape}
		\begin{equation}\label{pfSkBr1}
			\begin{tikzcdscale}[cramped, font = \large]
				{((P*J^\#A)*J^\#B)*J^\#C} &&&& {(P*J^\#A)*(J^\#B \ox J^\#C)} && {(P*J^\#A)*(J^\#C \ox J^\#B)} &&&& {((P*J^\#A)*J^\#C)*J^\#B} \\
				\\
				{(P*(J^\#A \ox J^\#B))*J^\#C} &&&& {P*(J^\#A \ox (J^\#B \ox J^\#C))} && {P*(J^\#A \ox (J^\#C \ox J^\#B))} &&&& {(P*(J^\#A \ox J^\#C))*J^\#B} \\
				&& {P*((J^\#A \ox J^\#B) \ox J^\#C)} &&&&&& {P*((J^\#A \ox J^\#C) \ox J^\#B)} \\
				{(P*(J^\#B \ox J^\#A))*J^\#C} &&&&&&&&&& {(P*(J^\#C \ox J^\#A))*J^\#B} \\
				&& {P*((J^\#B \ox J^\#A) \ox J^\#C)} &&&&&& {P*((J^\#C \ox J^\#A) \ox J^\#B)} \\
				{((P*J^\#B)*J^\#A)*J^\#C} &&&&&&&&&& {((P*J^\#C)*J^\#A)*J^\#B} \\
				&& {P*(J^\#B \ox (J^\#A \ox J^\#C))} &&&&&& {P*(J^\#C \ox (J^\#A \ox J^\#B))} \\
				{(P*J^\#B)*(J^\#A \ox J^\#C)} &&&&&&&&&& {(P*J^\#C)*(J^\#A \ox J^\#B)} \\
				&& {P*(J^\#B \ox (J^\#C \ox J^\#A))} &&&&&& {P*(J^\#C \ox (J^\#B \ox J^\#A))} \\
				{(P*J^\#B)*(J^\#C \ox J^\#A)} &&&& {P*((J^\#B \ox J^\#C) \ox J^\#A)} && {P*((J^\#C \ox J^\#B) \ox J^\#A)} &&&& {(P*J^\#C)*(J^\#B \ox J^\#A)} \\
				\\
				{((P*J^\#B)*J^\#C)*J^\#A} &&&& {(P*(J^\#B \ox J^\#C))*J^\#A} && {(P*(J^\#C \ox J^\#B))*J^\#A} &&&& {((P*J^\#C)*J^\#B)*J^\#A}
				\arrow["{m^{J^\#B,J^\#C}_{P*J^\#A}}", from=1-1, to=1-5]
				\arrow["{m^{J^\#A,J^\#B}_{P}*J^\#C}"', from=1-1, to=3-1]
				\arrow["{(P*J^\#A)*c_{J^\#B, J^\#C}}", bend left, from=1-5, to=1-7]
				\arrow["{m^{J^\#A, J^\#B \ox J^\#C}_P}"', from=1-5, to=3-5]
				\arrow["{\inv{m^{J^\#C, J^\#B}_{P*J^\#A}}}", from=1-7, to=1-11]
				\arrow["{m^{J^\#A, J^\#C \ox J^\#B}_P}", from=1-7, to=3-7]
				\arrow["{m^{J^\#A,J^\#C}_P * J^\#B}", from=1-11, to=3-11]
				\arrow["{m^{J^\#A \ox J^\#B, J^\#C}_P}"', from=3-1, to=4-3]
				\arrow["{(P * c_{J^\#A,J^\#B})*J^\#C}"', from=3-1, to=5-1]
				\arrow["{P*(J^\#A \ox c_{J^\#B, J^\#C})}"', bend right, from=3-5, to=3-7]
				\arrow["{P*c_{J^\#A,J^\#B \ox J^\#C}}"{description}, from=3-5, to=11-5]
				\arrow["{P*c_{J^\#A,J^\#C \ox J^\#B}}"{description}, from=3-7, to=11-7]
				\arrow["{\inv{m^{J^\#A \ox J^\#C, J^\#B}_P}}"', to=3-11, from=4-9]
				\arrow["{(P*c_{J^\#A,J^\#C}) * J^\#B}", from=3-11, to=5-11]
				\arrow["{P*a_{J^\#A, J^\#B, J^\#C}}"', from=4-3, to=3-5]
				\arrow["{P * (c_{J^\#A,J^\#B} \ox J^\#C)}", from=4-3, to=6-3]
				\arrow["{P*a_{J^\#A, J^\#C, J^\#B}^{-1}}"', to=4-9, from=3-7]
				\arrow["{P * (c_{J^\#A,J^\#C} \ox J^\#B)}"', from=4-9, to=6-9]
				\arrow["{m^{J^\#B \ox J^\#A, J^\#C}_P}"', from=5-1, to=6-3]
				\arrow["{\inv{m^{J^\#B,J^\#A}_P}*J^\#C}"', from=5-1, to=7-1]
				\arrow["{m^{J^\#C \ox J^\#A, J^\#B}_P}", from=5-11, to=6-9]
				\arrow["{\inv{m^{J^\#C,J^\#A}_P}*J^\#B}", from=5-11, to=7-11]
				\arrow["{P*a_{J^\#B, J^\#A, J^\#C}}", from=6-3, to=8-3]
				\arrow["{P*a_{J^\#C, J^\#A, J^\#B}}"', from=6-9, to=8-9]
				\arrow["{m^{J^\#A,J^\#C}_{P*J^\#B}}"', from=7-1, to=9-1]
				\arrow["{m^{J^\#A,J^\#B}_{P*J^\#C}}", from=7-11, to=9-11]
				\arrow["{P*(J^\#B \ox c_{J^\#A,J^\#C})}", from=8-3, to=10-3]
				\arrow["{P*(J^\#C \ox c_{J^\#A,J^\#B})}"', from=8-9, to=10-9]
				\arrow["{m^{J^\#B, J^\#A \ox J^\#C}_P}"', from=9-1, to=8-3]
				\arrow["{(P*J^\#B)*c_{J^\#A,J^\#C}}"', from=9-1, to=11-1]
				\arrow["{m^{J^\#C, J^\#A \ox J^\#B}_P}", from=9-11, to=8-9]
				\arrow["{(P*J^\#C)*c_{J^\#A,J^\#B}}", from=9-11, to=11-11]
				\arrow["{m^{J^\#B, J^\#C \ox J^\#A}_P}"', from=11-1, to=10-3]
				\arrow["{\inv{m^{J^\#C,J^\#A}_{P*J^\#B}}}"', from=11-1, to=13-1]
				\arrow["{P*a_{J^\#B, J^\#C, J^\#A}^{-1}}"', to=11-5, from=10-3]
				\arrow["{P*(c_{J^\#B, J^\#C} \ox J^\#A)}", bend left, from=11-5, to=11-7]
				\arrow["{P*a_{J^\#C, J^\#B, J^\#A}}"', from=11-7, to=10-9]
				\arrow["{\inv{m^{J^\#C,J^\#B \ox J^\#A}_{P}}}"', to=11-11, from=10-9]
				\arrow["{\inv{m^{J^\#B,J^\#A}_{P*J^\#C}}}", from=11-11, to=13-11]
				\arrow["{m^{J^\#B,J^\#C}_P*J^\#A}"', from=13-1, to=13-5]
				\arrow["{m^{J^\#B \ox J^\#C, J^\#A}_P}", from=13-5, to=11-5]
				\arrow["{(P*c_{J^\#B, J^\#C}) * J^\#A}"', bend right, from=13-5, to=13-7]
				\arrow["{m^{J^\#C \ox J^\#B, J^\#A}_P}"', from=13-7, to=11-7]
				\arrow["{\inv{m^{J^\#C,J^\#B}_P}*J^\#A}"', from=13-7, to=13-11]
			\end{tikzcdscale}
		\end{equation}
		
		\begin{equation}\begin{tikzcdscale}[cramped]\label{pfSkBr2}
				&&& {((P*J^\#B)*J^\#A)*J^\#C}\\
				\\
				{(P*(J^\#B \ox J^\#A))*J^\#C} &&&&&& {(P*J^\#B)*(J^\#A \ox J^\#C)} \\
				&& {P*((J^\#B \ox J^\#A) \ox J^\#C)} && {P*(J^\#B \ox (J^\#A \ox J^\#C))}\\
				{(P*(J^\#A \ox J^\#B))*J^\#C} &&&&&& {(P*J^\#B)*(J^\#C \ox J^\#A)} \\
				&& {P*((J^\#A \ox J^\#B) \ox J^\#C)} && {P*(J^\#B \ox (J^\#C \ox J^\#A))}\\
				{((P*J^\#A)*J^\#B)*J^\#C} &&&&&& {((P*J^\#B)*J^\#C)*J^\#A} \\
				&& {P*(J^\#A \ox (J^\#B \ox J^\#C))} && {P*((J^\#B \ox J^\#C) \ox J^\#A)}\\
				{(P*J^\#A)*(J^\#B \ox J^\#C)} &&&&&& {(P*(J^\#B \ox J^\#C))*J^\#A} \\
				&& {P*(J^\#A \ox J^\#(B * J^\#C))} && {P*(J^\#(B*J^\#C) \ox J^\#A)}\\
				{(P*J^\#A)*J^\#(B*J^\#C)} &&&&&& {(P*J^\#(B*J^\#C))*J^\#A} \\
				\arrow["{\inv{m^{J^\#B,J^\#A}_P}*J^\#C}", from=3-1, to=1-4]
				\arrow["{m^{J^\#A,J^\#C}_{P*J^\#B}}", from=1-4, to=3-7]
				\arrow["{m^{J^\#B \ox J^\#A, J^\#C}_P}", from=3-1, to=4-3]
				\arrow["{P*a_{J^\#B,J^\#A,J^\#C}}", from=4-3, to=4-5]
				\arrow["{(P*c_{J^\#A,J^\#B})*J^\#C}", from=5-1, to=3-1]
				\arrow["{m^{J^\#A \ox J^\#B, J^\#C}_P}"', from=5-1, to=6-3]
				\arrow["{P*(c_{J^\#A,J^\#B} \ox J^\#C)}", from=6-3, to=4-3]
				\arrow["{\inv{m^{J^\#B, J^\#A \ox J^\#C}_P}}", to=3-7, from=4-5]
				\arrow["{(P*J^\#B)*c_{J^\#A,J^\#C}}", from=3-7, to=5-7]
				\arrow["{\inv{m^{J^\#B, J^\#C \ox J^\#A}_P}}"', to=5-7, from=6-5]
				\arrow["{P*(J^\#B \ox c_{J^\#A,J^\#C})}", from=4-5, to=6-5]
				\arrow["{m^{J^\#A,J^\#B}_P*J^\#C}", from=7-1, to=5-1]
				\arrow["{m^{J^\#B,J^\#C}_{P*J^\#A}}"', from=7-1, to=9-1]
				\arrow["{m^{J^\#A,J^\#B \ox J^\#C}_P}", from=9-1, to=8-3]
				\arrow["{P*a_{J^\#A,J^\#B,J^\#C}}", from=6-3, to=8-3]
				\arrow["{\inv{m^{J^\#C,J^\#A}_{P*J^\#B}}}", from=5-7, to=7-7]
				\arrow["{m^{J^\#B,J^\#C}_P*J^\#A}", from=7-7, to=9-7]
				\arrow["{\inv{m^{J^\#B \ox J^\#C,J^\#A}_P}}", to=9-7, from=8-5]
				\arrow["{P*a_{J^\#B,J^\#C,J^\#A}}", from=8-5, to=6-5]
				\arrow["{P*c_{J^\#A, J^\#B \ox J^\#C}}"', from=8-3, to=8-5]
				\arrow["{(P*J^\#A)*\tilde{\gamma}_{B,C}}"', from=9-1, to=11-1]
				\arrow["{P*(J^\#A \ox \tilde{\gamma}_{B,C})}"', from=8-3, to=10-3]
				\arrow["{P*(\tilde{\gamma}_{B,C} \ox J^\#A)}", from=8-5, to=10-5]
				\arrow["{(P*\tilde{\gamma}_{B,C})*J^\#A}", from=9-7, to=11-7]
				\arrow["{P*c_{J^\#A, J^\#(B*J^\#C)}}"', from=10-3, to=10-5]
				\arrow["{m^{J^\#A,J^\#(B*J^\#C)}_P}"', from=11-1, to=10-3]
				\arrow["{\inv{m^{J^\#(B*J^\#C),J^\#A}_P}}"', from=10-5, to=11-7]
			\end{tikzcdscale}
		\end{equation}
		
		\begin{equation}\label{pfSkBr4}
			\begin{tikzcdscale}[cramped]
				&& {(P*J^\#(A*J^\#B))*J^\#C} \\
				& {(P*(J^\#A \ox J^\#B))*J^\#C} && {P*(J^\#(A*J^\#B) \ox J^\#C)} \\
				{((P*J^\#A)*J^\#B)*J^\#C} && {P*((J^\#A \ox J^\#B) \ox J^\#C)} && {P*J^\#((A*J^\#B)*J^\#C)} \\
				\\
				{(P*J^\#A)*(J^\#B \ox J^\#C)} && {P*(J^\#A \ox (J^\#B \ox J^\#C))} && {P*J^\#(A*(J^\#B \ox J^\#C))} \\
				\\
				{(P*J^\#A)*(J^\#C \ox J^\#B)} && {P*(J^\#A \ox (J^\#C \ox J^\#B))} && {P*J^\#(A*(J^\#C \ox J^\#B))} \\
				\\
				{((P*J^\#A)*J^\#C)*J^\#B} && {P*((J^\#A \ox J^\#C) \ox J^\#B)} && {P*J^\#((A*J^\#C)*J^\#B)} \\
				& {(P*(J^\#A \ox J^\#C))*J^\#B} && {P*(J^\#(A*J^\#C) \ox J^\#B)} \\
				&& {(P*J^\#(A*J^\#C))*J^\#B}
				\arrow["{m^{J^\#(A*J^\#B),J^\#C}_P}", from=1-3, to=2-4]
				\arrow["{(P*\tilde{\gamma}_{A,B})* J^\#C}", from=2-2, to=1-3]
				\arrow["{m^{J^\#A \ox J^\#B,J^\#C}_P}"', from=2-2, to=3-3]
				\arrow["{P*\tilde{\gamma}_{A*J^\#B,C}}", from=2-4, to=3-5]
				\arrow["{m^{J^\#A,J^\#B}_P*J^\#C}", from=3-1, to=2-2]
				\arrow["{m^{J^\#B,J^\#C}_{P*J^\#A}}"', from=3-1, to=5-1]
				\arrow["{P*(\tilde{\gamma}_{A,B} \ox J^\#C)}"', from=3-3, to=2-4]
				\arrow["{P*a_{J^\#A,J^\#B,J^\#C}}"', from=3-3, to=5-3]
				\arrow["{P*J^\#m^{J^\#B,J^\#C}_A}", from=3-5, to=5-5]
				\arrow["{m^{J^\#A, J^\#B \ox J^\#C}_P}", from=5-1, to=5-3]
				\arrow["{(P*J^\#A)*c_{J^\#B,J^\#C}}"', from=5-1, to=7-1]
				\arrow["{P*\varsigma_{A,J^\#B \ox J^\#C}}", from=5-3, to=5-5]
				\arrow["{P*(J^\#A \ox c_{J^\#B,J^\#C})}"', from=5-3, to=7-3]
				\arrow["{P*J^\#(A*c_{J^\#B, J^\#C})}", from=5-5, to=7-5]
				\arrow["{m^{J^\#A, J^\#C \ox J^\#B}_P}"', from=7-1, to=7-3]
				\arrow["{P*\varsigma_{A,J^\#C \ox J^\#B}}"', from=7-3, to=7-5]
				\arrow["{\inv{m^{J^\#C,J^\#B}_{P*J^\#A}}}"', to=9-1, from=7-1]
				\arrow["{m^{J^\#A,J^\#C}_P*J^\#B}"', from=9-1, to=10-2]
				\arrow["{P*a_{J^\#A,J^\#C,J^\#B}}", from=9-3, to=7-3]
				\arrow["{P*(\tilde{\gamma}_{A,C} \ox J^\#B)}", from=9-3, to=10-4]
				\arrow["{P*J^\#m^{J^\#C,J^\#B}_A}"', from=9-5, to=7-5]
				\arrow["{m^{J^\#A \ox J^\#C,J^\#B}_P}", from=10-2, to=9-3]
				\arrow["{(P*\tilde{\gamma}_{A,C})* J^\#B}"', from=10-2, to=11-3]
				\arrow["{P*\tilde{\gamma}_{A*J^\#C,B}}"', from=10-4, to=9-5]
				\arrow["{m^{J^\#(A*J^\#C),J^\#B}_P}"', from=11-3, to=10-4]
			\end{tikzcdscale}
		\end{equation}
		
	\end{landscape}
	
	\restoregeometry
	

	\begin{remark}
		In the proposition above, we obtained a right braiding on a left skew monoidal category by considering structures with tensor $A*J^\#B$ as in (\ref{VarThm}), i.e. induced by a right action and a right adjoint. If we consider structures induced on right actegories by a left adjoint, we would get right braidings on right skew monoidal categories.
		
		Similarly, on left actegories, the induced braiding would be left -- depending on whether we use left or right adjoint to define the tensor, we would get a left braiding on either left or right skew monoidal category.
	\end{remark}

	\section{Closedness of the induced structures}\label{Closedness}
	
	In this section, we present sufficient conditions under which the structure obtained as in Theorem (\ref{TheThm}) is left/right closed.
	
	\begin{definition}
		We say that a skew monoidal structure $(\A, \sk, J)$ is \textit{right closed}, if the functor $A \sk (-)$ has a right adjoint $\inhom{A,-}$ for any $A \in \A$.
		
		Analogously, we say that $(\A, \sk, J)$ is \textit{left closed} if $(-) \sk B$ has a right adjoint $\inhom{B,-}$ for any $B \in \A$.
	\end{definition}
	
	When discussing left or right closedness for structures arising from variants of Theorem (\ref{TheThm}), it is not that important to distinguish whether we start with a left or a right action. Results for structures arising from right actions will mirror those for structures arising from left actions, if we switch the roles of left and right closedness.
	
	We will therefore only treat cases arising from left actions. On the other hand, since we are specifically interested in the existence of right adjoints of the tensor functors, it may be sensible to distinguish whether we assume that $J_*$ has a left or a right adjoint.
	
	We will observe that closedness of $(\A, \sk, J)$ is closely related to \uv{closedness} of the action of \mV, which is formalised in the following definition.
	
	\begin{definition}
		Let \mV be a (skew) monoidal category acting on a category \mA (from the left). We say that the action is \textit{left closed} (or that \mA is a \textit{left closed actegory}), if for every $A$ in \mA the functor $A_* = (-)*A: \V \rightarrow \A$ has a right adjoint.
		
		We say that the action is \textit{right closed} (or that \mA is a \textit{right closed actegory}), if for every object $X$ in \mV, the functor $X*(-): \A \rightarrow \A$ has a right adjoint $\inhom{X,-}_*$. 
	\end{definition}
	
	Let us assume $(\A, \sk, J)$ is defined with tensor $A \sk B = J_!A * B$ as in (\ref{VarThm}). Then, the functor $(-) \sk B$ can be understood as the composite functor $B_*J_!$ (where $B_* = (-)*B$). In order for this structure to be left closed, we want a right adjoint
	\[\begin{tikzcd}
		\A && \V && \A
		\arrow["{J_!}", from=1-1, to=1-3]
		\arrow["{B_*}", from=1-3, to=1-5]
		\arrow[""{name=0, anchor=center, inner sep=0}, "{\inhom{B,-}}", bend left=45, from=1-5, to=1-1]
		\arrow["\dashv"{anchor=center, rotate=-90}, draw=none, from=1-3, to=0]
	\end{tikzcd}\]
	As $J_! \dashv J_*$, it is enough to assume that \mA is a left closed actegory, i.e. that $B_*$ has a right adjoint $B_* \dashv B_\#$ for any $B$. Then, $\inhom{B,-}$ can be defined as the composite $J_*B_\#$:
	\[\begin{tikzcd}
		\A && \V && \A
		\arrow[""{name=0, anchor=center, inner sep=0}, "{J_!}", from=1-1, to=1-3]
		\arrow[""{name=1, anchor=center, inner sep=0}, "{J_*}", bend left=45, from=1-3, to=1-1]
		\arrow[""{name=2, anchor=center, inner sep=0}, "{B_*}", from=1-3, to=1-5]
		\arrow[""{name=3, anchor=center, inner sep=0}, "{B_\#}", bend left=45, from=1-5, to=1-3]
		\arrow["\dashv"{anchor=center, rotate=-90}, draw=none, from=0, to=1]
		\arrow["\dashv"{anchor=center, rotate=-90}, draw=none, from=2, to=3]
	\end{tikzcd}\]

	Now, let us turn to the case where $(\A, \sk, J)$ has tensor of the form $A \sk B = J_\#A * B$ as in (\ref{VarThm}). Then, $(-) \sk B$ can be seen again as the composite $B_*J_\#$. Under the general assumptions, neither of these functors has a right adjoint. However, if we suppose that \mA is a left closed \mV-actegory, then we have $B_* \dashv B_\#$. Therefore, to give a right adjoint $\inhom{B,-}$, it would suffice for $J_\#$ to have a right adjoint $R_J$. 
	\[\begin{tikzcd}
		\A && \V && \A
		\arrow[""{name=0, anchor=center, inner sep=0}, "{J_\#}"{description}, from=1-1, to=1-3]
		\arrow[""{name=1, anchor=center, inner sep=0}, "{J_*}"', bend right=45, from=1-3, to=1-1]
		\arrow[""{name=2, anchor=center, inner sep=0}, "{R_J}", bend left=45, from=1-3, to=1-1]
		\arrow[""{name=3, anchor=center, inner sep=0}, "{B_*}", from=1-3, to=1-5]
		\arrow[""{name=4, anchor=center, inner sep=0}, "{B_\#}", bend left=45, from=1-5, to=1-3]
		\arrow["\dashv"{anchor=center, rotate=-90}, draw=none, from=0, to=2]
		\arrow["\dashv"{anchor=center, rotate=-90}, draw=none, from=3, to=4]
		\arrow["\dashv"{anchor=center, rotate=-90}, draw=none, from=1, to=0]
	\end{tikzcd}\]
	Then, we would have $\inhom{B,-} \defeq R_JB_\#$.
	
	Right closedness for the cases discussed above amounts to the functor $J_!A*(-)$ or $J_\#A*(-)$ having a right adjoint. This would be implied if the action was right closed\footnote{It would suffice for it to be \uv{right closed on the image of $J_!$ or $J_\#$}.}. 
	
	We summarize the observations made above in the following proposition.
	
	\begin{proposition}
		\begin{enumerate}[(i)]
			\item Suppose $(\A, \sk, J)$ is a left skew monoidal category with tensor $A \sk B = J_!A * B$ as in (\ref{VarThm}). If the action is left closed, so is $(\A, \sk, J)$ with $\inhom{B,-} = J_*B_\#$.
			
			If the action is right closed, so is $(\A, \sk, J)$ with $\inhom{A,-} = \inhom{J_!A,-}_*$.
			\item Suppose $(\A, \sk, J)$ is a left skew monoidal category with tensor $A \sk B = J_\#A * B$ as in (\ref{VarThm}). If the action is left closed and in addition the functor $J_\#$ has a right adjoint $R_J$, then $(\A, \sk, J)$ is left closed with $\inhom{B,-} = R_JB_\#$.
			
			If the action is right closed, so is $(\A, \sk, J)$ with $\inhom{A,-} = \inhom{J_\#A,-}_*$.
		\end{enumerate}
	\end{proposition}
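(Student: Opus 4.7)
The proof is essentially a straightforward exercise in composing adjunctions, and the discussion immediately preceding the proposition already sketches the argument; what remains is to organize it into four cases and verify the stated formulas.

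For part (i), the plan is to unfold $(-) \sk B$ as the composite $B_* \circ J_!$, where $B_* \defeq (-) * B$. Theorem (\ref{TheThm}) supplies $J_! \dashv J_*$, and left closedness of the action supplies $B_* \dashv B_\#$ for every $B \in \A$. Composing these two adjunctions yields $B_* J_! \dashv J_* B_\#$, giving the stated formula $\inhom{B,-} = J_* B_\#$. For right closedness, I would rewrite $A \sk (-)$ as $(J_!A) * (-)$. Right closedness of the action provides, for every $X \in \V$, a right adjoint $\inhom{X,-}_*$ of $X * (-)$; specializing at $X = J_!A$ gives $\inhom{A,-} = \inhom{J_!A,-}_*$.

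For part (ii), the tensor $A \sk B = J_\#A * B$ means $(-) \sk B$ factors as $B_* \circ J_\#$. The crucial difference from (i) is that $J_\#$ is itself a right adjoint (to $J_*$), so it need not have a further right adjoint; this is precisely why the extra hypothesis $J_\# \dashv R_J$ appears. Granting it, and combining with $B_* \dashv B_\#$ from left closedness of the action, the composite adjunction $B_* J_\# \dashv R_J B_\#$ yields the stated $\inhom{B,-} = R_J B_\#$. The right-closedness claim runs exactly as in (i), with $J_!A$ replaced by $J_\#A$: the functor $A \sk (-) = (J_\#A) * (-)$ inherits a right adjoint $\inhom{J_\#A,-}_*$ from right closedness of the action.

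I do not anticipate a genuine obstacle: the argument uses none of the coherence data and appeals only to the formal fact that adjunctions compose. The one point requiring care is variance bookkeeping — right adjoints compose in the reverse order of the originals, so the expressions $J_* B_\#$ and $R_J B_\#$ must be read as ``apply $B_\#$ first, then $J_*$ (respectively $R_J$)''. Once this is kept straight, each of the four claims is a single line of adjunction composition.
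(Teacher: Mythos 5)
Your proposal is correct and matches the paper's own argument exactly: the paper proves the proposition via the preceding discussion, decomposing $(-)\sk B$ as $B_*J_!$ (resp.\ $B_*J_\#$) and composing the adjunctions $J_!\dashv J_*$ (resp.\ $J_\#\dashv R_J$) with $B_*\dashv B_\#$, and handling right closedness by specializing the action's right adjoint at $J_!A$ or $J_\#A$. Nothing further is needed.
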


	\begin{example}
		Suppose \mC is a small category and \mD is a complete category.
		Then all pointwise right Kan extensions of functors $\C \rightarrow \D$ along functors $\C \rightarrow \D$ exist. Consider $[\C,\D]$ with left skew monoidal structure as in (\ref{exFun}), i.e. with tensor $F \sk G = \Lan_JF \circ G$ for some fixed $J$ (assuming that $\Lan_J$ exists). For any $G: \C \rightarrow \D$, the functor $G_* = (-) \circ G$ has a right adjoint given by the right Kan extension. This means this skew monoidal structure is left closed with internal hom $\inhom{G,F} = \Ran_GF \circ J$.
	\end{example}

\end{document}